\title{Changepoint Detection over Graphs with the\\ Spectral Scan Statistic}
\author[1,2]{
James Sharpnack
\thanks{jsharpna@cs.cmu.edu}}
\author[2]{
Alessandro Rinaldo
\thanks{arinaldo@cmu.edu}}
\author[1]{
Aarti Singh
\thanks{aarti@cs.cmu.edu}
}
\affil[1]{Machine Learning Department\\
Carnegie Mellon University}
\affil[2]{Statistics Department\\
Carnegie Mellon University}
\begin{document}

\maketitle

\begin{abstract}
We consider the change-point detection problem of deciding, based on noisy measurements, whether an unknown signal over a given graph is constant or is instead piecewise constant over two connected induced subgraphs of relatively low cut size. We analyze the corresponding generalized likelihood ratio (GLR) statistics and relate it to the problem of finding a sparsest cut in a graph. We develop a tractable relaxation of the  GLR statistic based on the combinatorial Laplacian of the graph, which we call the spectral scan statistic,  and analyze its properties. We show how its performance as a testing procedure depends directly on the spectrum of the graph, and use this  result to explicitly derive its asymptotic properties on few significant graph topologies. Finally, we demonstrate both theoretically and by simulations that the spectral scan statistic can outperform naive testing procedures based on edge thresholding and $\chi^2$ testing. 
\end{abstract}


\section{Introduction}


In this article we are concerned with the basic but fundamental task of deciding whether a given graph, over which a noisy signal is observed, contains a cluster of anomalous or activated nodes comprising an induced connected subgraph. Such a problem is highly relevant in a variety of scientific areas, such as community detection in social networks, surveillance,  disease outbreak detection, biomedical imaging, sensor network detection, gene network analysis, environmental monitoring and malware detection. 
Recent theoretical contributions in the statistical literature (see, e.g., \cite{castro:05, arias2008searching, arias2011detection,addario2010combinatorial}) have detailed the inherent difficulty of such a testing problem in relatively simplified settings and under specific conditions on the graph topology.
From a practical standpoint, the natural algorithm for  detection of anomalous clusters of activity in graphs  is the the generalized likelihood ratio test (GLRT)  or scan statistic,  a computationally intensive procedure that entails scanning all well connected clusters and testing individually for anomalous activation. 
Unfortunately, its performance over general graphs is not well understood, and little attention has been paid to determining alternative, computationally tractable, procedures.

In this article we assume that the class of clusters of activation consists of sub-graphs of small cut size. We believe this is a natural and realistic assumption which, as we demonstrate below,  allows us to explicitly incorporate into the detection problem the properties of the graph topology through its spectrum.
In particular, we show that the GLRT is an integer program with a term in the objective that corresponds to the sparsest cut in a graph, a known NP-hard problem.
With this in mind, we propose a relaxation of the GLRT, called the spectral scan statistic, which is based on the combinatorial Laplacian of the graph and, importantly, is a tractable program.
As our main result, we derive theoretical guarantees for the performance of the spectral scan statistic, which hold for any graph and are based on the spectral measure of the combinatorial Laplacian.
For comparison purposes, we derive theoretical guarantees for two simple estimators, the edge thresholding and the $\chi^2$ test.
We conclude our study by applying the main result to balanced binary trees, the lattice, and Kronecker graphs, giving us precise asymptotic results.
We find that, modulo logarithm terms, the spectral scan statistic has nearly optimal power for balanced binary trees.
Simulations for these models verify that the spectral scan statistic dominates the simple estimators.

{\bf Contributions}. Our contributions are as follows. (1) We define a new class of activation patterns based on the notion of small cut size that reflects in a natural way the topological properties of the graph. (2) We analyze the corresponding GLR statistics and show that it is indeed related to the problem of finding sparest cuts. We then develop a computationally tractable relaxation of the GLR statistic, called the spectral scan statistic and analyze its properties. In our main theoretical result, we show show that the performance of the spectral scan statistic depends explicitly on the spectral properties of the graph. (3) Using such results we are able to characterize in a very explicit form the performance of the  spectral scan statistic on a few notable graph topologies and demonstrate its superiority over naive detectors, such as the edge thresholding and the $\chi^2$ test. (4) Finally, we have formulated the detection problem under more general and realistic scenarios, which involve composite null and alternative hypotheses as opposed to simple hypotheses as is customary in the theoretical statistical literature on this subject.

{\bf Related Work.}
Normal means testing in high-dimensions is a well established and fundamental problem in statistics (see, e.g., \cite{ingster2003nonparametric}). A significant portion of the recent work in this area (\cite{castro:05, arias2008searching, arias2011detection,addario2010combinatorial}) has focused on incorporating structural assumptions on the signal, as a way to mitigate the effect of high-dimensionality and also because many real-life problems can be represented as instances of the normal means problem with graph-structured signals (see, for an example, \cite{jacob2010gains}).
These contributions have considered the generalized likelihood ratio test of means when the alternative hypothesis takes on the form of a combinatorial space. However, the  performance of  such test has been analyzed only for certain types of graphs, and it is unclear to what extent those analyses extend to general graph topologies.
Moreover, while much is known about the theoretical performance of the GLRT, no mention is made about its computational feasibility.
Another line of research relevant to our problem is the  optimal fail detection with nuisance parameters and matched subspace detection in the signal processing literature: see, e.g. \cite{scharf94,baygun.hero:95,fouladirad2005optimal,fouladirad2008optimal}. Though our problem can be cast as a special case of the more general problem of optimal testing of a linear subspace under nuisance parameters considered in that line of work, the focus on a graph-structured signal, as well as the type of analysis based on the interplay between the scan statistics and the spectral properties of the graph contained in our work, are novel. 


\subsection{Problem Setup}
We now formalize the problem of detecting a change of signal over the vertices of a graph from noisy observations in the high-dimensional setting. 
For a given connected, undirected, possibly weighted graph $G = (V,E)$ on $|V|=n$ nodes, we observe {\it one} realization of the random vector
\begin{equation}\label{eq:model}
\yb = \betab + \epsilonb,
\end{equation}
where $\betab \in \RR^V$ and $\epsilonb \sim N(0,\sigma^2 \Ib_n)$, with  $\sigma^2$ known. 
We will assume that there are two groups of constant activation for the signal $\betab$, namely that there exists a subset  $C \subset V$ such that $\betab$ is constant within both $C$ and it complement $\bar C = V \backslash C$. We formalize this assumption by writing
\begin{equation}\label{eq:beta.signal}
\betab = \mu {\bf 1} + \delta {\bf 1}_C,
\end{equation}
where $\mu, \delta \in \mathbb{R}$ are unknown parameters, ${\bf 1} \in \mathbb{R}^V$ is a $n$-dimensional vector of ones and ${\bf 1}_C$ is the indicator function of the subset $C$. The parameter $\mu$ can be thought of as the magnitude of the background signal and is a nuisance parameter, while $\delta$ quantifies the  the gap in signal between the two clusters. Setting $\bar \betab = {\bf 1}^\top \betab / n$, we will use $\| \betab - \bar \betab \|$ to measure the energy of the signal (note that this quantity is independent of $\mu$), and we will define the signal-to-noise ratio (SNR) to be
\[
\frac{\| \betab - \bar \betab \|}{\sigma} =  \sqrt{\frac{|C||\bar C|}{n}} \frac{\delta}{\sigma}.
\]
We will not assume any knowledge of the true clustering $(C,\bar C)$, other than that it belongs to a given class $\mathcal{C}$ of bi-partitions $(C,\bar C)$ of $V$ such that $C$ and $\bar{C}$ are both large and can be easily disconnected, in that they have low cut size . 
Formally, we define, for some $\rho > 0$,
\begin{equation}
\label{eqn:Cclass}
\Ccal = \Ccal(\rho) =  \left\{C \subset V, C \neq \emptyset \colon \frac{ |\partial C| }{ |C||\bar C|} \le \frac {\rho} {|V|} \right\},
\end{equation}
where $\partial C = \{(i,j) \in E: i \in C, j \in \bar C \}$ is  the boundary of $C$. Note that $\mathcal{C}$ is a symmetric class in the sense that $C \in \mathcal{C}$ if and only if $\bar C \in \mathcal{C}$. 
We are interested in the problem of testing whether the gap parameter $\delta$ in equation \eqref{eq:beta.signal} is zero (i.e. the signal $\betab$ is constant) or it is non-zero for some $C \in \mathcal{C}$, regardless of the value of $\mu$. Thus, we can naturally cast our structured change-point detection problem as the following composite hypothesis testing problem:
\begin{equation}\label{eq:H0:H1}
H_0 \colon \betab \in \Theta_0 \quad \textrm{vs} \quad H_1 \colon \betab \in \Theta_1, 
\end{equation}
where $\Theta_0 = \{ \mu {\bf 1}, \mu \in \mathbb{R}\}$ and $\Theta_1 = \{  {\bf 1} \mu + {\bf 1}_C \delta, \mu \in \mathbb{R}, \delta \in \mathbb{R} \setminus \{ 0\}, C \in \mathcal{C}\}$. Notice that the alternative can be written as the join over $\mathcal{C}$ of disjoint composite alternatives of the form $
H_1^C \colon \betab \in \Theta^C_1 : = \{  {\bf 1} \mu + {\bf 1}_C \delta, \mu \in \mathbb{R}, \delta \in \mathbb{R} \setminus \{ 0\} \} 
$, $C \in \mathcal{C}$.

To make our analysis meaningful, we measure the difficulty of the detection problem in terms of the energy parameter by assuming that, for some $\eta > 0$, $\| \betab - \bar \betab\| > \eta, \quad \forall \betab \in \Theta_1$.
Thus, we can think of $\eta$ as the minimal degree of separation between the null and alternative hypotheses. 
Below we will analyze asymptotic conditions under which the hypothesis testing problem described above is feasible, in a sense made precise in the next definition, 
when the size of the graph $n$ increases unboundedly. To this end, we will further assume  that the relevant parameters of the model, $\eta$, $\sigma$, $\delta$ and $\rho$ change with $n$ as well, even though we will not make such dependence explicit in our notation for ease of readability.  
Our results establish conditions for asymptotic disinguishability as a function of the SNR $\eta/\sigma$ and $\rho$ and the spectrum of the graph $G$.

\begin{definition}
Let $P_{\theta}$ denote the distribution of ${\bf y}$ induced by the model \eqref{eq:model}, where $\theta \in \Theta_0 \cup \Theta_1$. For a given statistic $S({\bf y})$ and threshold $\tau \in \mathbb{R}$, let $T = T({\bf y})$ be $1$ if $S({\bf y}) > \tau$ and $0$ otherwise. We say that the hypotheses $H_0$ and $H_1$ are {\bf asymptotically distinguished by the test} $T$ if
\begin{equation}
    \label{eqn:asymp_dist}
  \sup_{\theta \in H_0} \PP_\theta \{ T=1 \} \rightarrow 0 \quad \textrm{ and } \quad \sup_{\theta \in H_1} \PP_\theta \{ T=0 \} \rightarrow 0, 
  \end{equation}
  where the limit is taken as $n \rightarrow \infty$.
We say that $H_0$ and $H_1$ are {\bf asymptotically indistinguishable} if there does not exist any test for which the above limits hold.
\end{definition}

{\bf Notation}. We will need some mathematical terminology from algebraic graph theory (\cite{godsil2001algebraic}).
A central object to our analysis is the {\em combinatorial Laplacian} matrix $\Lb = \Db - \Wb$, where 
 $\Wb = (I\{(v,w) \in E\})_{v,w \in V}$ is the adjacency matrix of the graph $G$ and  $\Db = \diag\{ d_v\}_{v \in V}$ is the diagonal matrix of node degrees, $d_v = \sum_{w \in V} W_{v,w}$, $v \in V$.  
If the graph is weighted then $W_{v,w}$ reflects this.
We will denote the eigenvalues of $\Lb$ with $\{\lambda_i \}_{i = 1}^n$, which we will always take in increasing order.
Since $G$ is connected, the smaller eigenvalue $\lambda_1 = 0$, with corresponding eigenvector, $\one$.
$\lambda_2$ is known as the {\em algebraic connectivity} and is lower bounded by $4[n \textrm{diam}(G)]^{-1}$ where $\textrm{diam}(G)$ is the diameter of the graph.
Throughout this study we use Bachmann-Landau notation for asymptotic statements: if $a_n/b_n \rightarrow 0$ then $a_n = o(b_n)$ and $b_n = \omega(a_n)$.

\section{Methods}

The hypothesis testing problem at hand presents two challenges: (1) the model contains an unbounded nuisance parameter $\mu \in \mathbb{R}$ and (2) the alternative hypothesis is comprised of a finite disjoint union of composite hypotheses indexed by $\mathcal{C}$. These features set our problem apart from virtually all existing work of structured normal means problems (see, e.g. \cite{castro:05, arias2008searching, arias2011detection,addario2010combinatorial}), which does not consider	 nuisance parameters and relies on a simplified framework consisting of a simple null hypothesis and a composite hypothesis consisting of disjoint unions of simple alternatives. Having nuisance parameters and composite hypothesis require a more sophisticated analysis.

We will eliminate the interference caused by the nuisance parameter by considering test procedures that are independent of $\mu$. The formal justification for this choice is based on the theory of optimal invariant hypothesis testing (see, e.g., \cite{lehmann2005testing}) and of uniformly best constant  power tests (see \cite{wald:43}). Due to space limitations we will not provide the details and refer the reader to \cite{fouladirad2008optimal,fouladirad2005optimal,fillatre2007non,fillatre2012,scharf94,baygun.hero:95} and references therein for in depth-treatments of these issues related to the model a hand.

For the simpler problem of testing $H_0$ versus $H_1^C$ for some $C \subset V$, the optimal test is based on the likelihood ratio (LR) statistic (see the proof of Lemma \ref{lem:GLRTform} below for a derivation)
\begin{equation}\label{eq:LR}
2 \log \Lambda_C(\yb) = \log\left( \frac{\sup_{\theta \in \Theta_1} f_\theta(\yb)}{\sup_{\theta \in \Theta_0} f_\theta(\yb)} \right) =  \frac{1}{\sigma^2} \frac{|V|}{|C| |\bar{C}|} \left( \sum_{v \in C} \tilde{\yb}_v \right)^2,
\end{equation}
where $\tilde \yb = \yb - \bar \yb = (\tilde \yb_v, v \in V)$ and $f_\theta$ is the Lebesgue density of $P_\theta$. This test rejects $H_0$ for large values of $\Lambda_C(\yb)$. Optimality follows from the fact that the statistical model we consider has the monotone likelihood ratio property.

When testing against composite alternatives, like in our case, it is customary to consider instead the generalized likelihood ratio (GLR) statistic, which in our case reduces to
\[
\hat g =  \max_{C \in \mathcal{C}(\rho)} 2 \sigma^2 \log \Lambda_C(\yb).
\]
Through manipulations of the likelihoods, we find that the GLR statistic has a very convenient form which is tied to the spectral properties of the graph $G$ via its Laplacian.
\begin{lemma}
\label{lem:GLRTform}
  Let $\tilde{\yb} = \yb - \one (\frac{1}{n} \sum_{v \in V} \yb_v)$ and $\Kb = \Ib - \frac{1}{n} \one \one^\top$. Then
\begin{equation}
\label{eqn:GLRT}
\hat g = \max_{\xb \in \{0, 1\}^n} \frac{\xb^\top \tilde{\yb}\tilde{\yb}^\top \xb}{\xb^\top \Kb \xb} \textrm{ s.t. } \frac{ \xb^\top \Lb \xb}{\xb^\top \Kb \xb} \le \rho,
\end{equation}
where $\Lb$ is the combinatorial Laplacian of the graph $G$.
\end{lemma}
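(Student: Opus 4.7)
The plan is to rewrite the GLR $\hat g$ through the bijection between subsets $C \subset V$ (with $C$ and $\bar C$ non-empty) and binary vectors $\xb = \one_C \in \{0,1\}^n \setminus \{\zero,\one\}$, and then to verify that under this identification the objective in \eqref{eqn:GLRT}, the constraint, and the quantity $2\sigma^2 \log \Lambda_C(\yb)$ of \eqref{eq:LR} all coincide. The trivial vectors $\xb = \zero$ and $\xb = \one$ require no separate handling since they make the denominator $\xb^\top \Kb \xb$ vanish and so are naturally excluded by the optimization. I expect the main work to be bookkeeping: rewriting the three quadratic forms in $\xb$ in closed form, and separately deriving the closed form \eqref{eq:LR} of the single-$C$ likelihood ratio.

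First I would establish \eqref{eq:LR}. Under $\Theta_0$ the MLE of $\mu$ is $\bar \yb$ with residual sum of squares $\|\tilde \yb\|^2$. Under $\Theta_1^C$, reparametrizing $\betab = \alpha_C \one_C + \alpha_{\bar C} \one_{\bar C}$ gives orthogonal design, so the MLEs are the subgroup means $\bar \yb_C$ and $\bar \yb_{\bar C}$. Applying the one-way ANOVA decomposition $\|\tilde\yb\|^2 = \sum_{v\in C}(\yb_v - \bar \yb_C)^2 + \sum_{v\in \bar C}(\yb_v - \bar \yb_{\bar C})^2 + |C|(\bar\yb_C - \bar\yb)^2 + |\bar C|(\bar\yb_{\bar C}-\bar\yb)^2$ and noting $\sum_{v\in C}\tilde \yb_v = -\sum_{v \in \bar C}\tilde \yb_v$ gives $2\sigma^2 \log \Lambda_C(\yb) = \tfrac{n}{|C||\bar C|}\bigl(\sum_{v\in C}\tilde\yb_v\bigr)^2$, which is \eqref{eq:LR}.

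Next I would translate each quadratic form in $\xb = \one_C$. Since $\xb^\top \one = |C|$ and $\|\xb\|^2 = |C|$, a direct computation gives $\xb^\top \Kb \xb = |C| - |C|^2/n = |C||\bar C|/n$. Since $\xb^\top \tilde \yb = \sum_{v\in C}\tilde \yb_v$, we get $\xb^\top \tilde\yb \tilde\yb^\top \xb = \bigl(\sum_{v\in C}\tilde\yb_v\bigr)^2$, so the ratio $\xb^\top \tilde\yb\tilde\yb^\top \xb / \xb^\top \Kb \xb$ equals $2\sigma^2 \log \Lambda_C(\yb)$ by the previous step. For the constraint, the standard Laplacian identity $\xb^\top \Lb \xb = \sum_{(i,j) \in E}(x_i - x_j)^2$ with $\xb \in \{0,1\}^n$ yields $\xb^\top \Lb \xb = |\partial C|$, so $\xb^\top \Lb \xb / \xb^\top \Kb \xb = n|\partial C|/(|C||\bar C|)$, and $\frac{\xb^\top \Lb \xb}{\xb^\top \Kb \xb} \le \rho$ is exactly the membership condition $C \in \Ccal(\rho)$ in \eqref{eqn:Cclass}. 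Combining these identifications with the definition $\hat g = \max_{C \in \Ccal(\rho)} 2\sigma^2 \log \Lambda_C(\yb)$ yields \eqref{eqn:GLRT}. The only subtle point worth flagging in the write-up is that the symmetry $C \leftrightarrow \bar C$ is respected because both $\xb$ and $\one - \xb$ produce the same three quadratic forms, so passing from subsets to $\{0,1\}^n$ does not introduce spurious extrema.
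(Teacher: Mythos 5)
Your proposal is correct and follows essentially the same route as the paper: derive the closed form \eqref{eq:LR} for the single-$C$ likelihood ratio, then identify $\xb=\one_C$ and check that $\xb^\top\Kb\xb=|C||\bar C|/n$, $\xb^\top\tilde\yb=\sum_{v\in C}\tilde\yb_v$, and $\xb^\top\Lb\xb=|\partial C|$ so that the objective and constraint match $2\sigma^2\log\Lambda_C(\yb)$ and the definition of $\Ccal(\rho)$. The only cosmetic difference is that you obtain \eqref{eq:LR} from the residual-sum-of-squares difference via the one-way ANOVA decomposition, whereas the paper computes the same between-group quantity directly from the sufficient statistics $\bar\yb_C,\bar\yb_{\bar C}$ and the precision-weighted null MLE $\hat\beta$; both are the same elementary calculation.
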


The proof is provided in the appendix.
The savvy reader will notice the connection between \eqref{eqn:GLRT} and the graph sparsest cut program.
By Lagrangian duality, we see that the program \eqref{eqn:GLRT} is equivalent to (for some Lagrangian parameter $\nu$) 
\[
\min_{C \subseteq V} \frac{|\partial C |}{|C||\bar C|} - \nu \frac{(\sum_{i \in C} \tilde y_i )^2}{|C||\bar C|}
\]
the first term of which is precisely the {\em sparsest cut} objective, and the second term drives the solution $C$ to have positive within cluster empirical correlations.
The sparsest cut program is known to be NP-hard, with poly-time algorithms known for trees and planar graphs(\cite{matula1990sparsest}).
Because of this fact, approximate algorithms have been proposed over the past two decades, most notably the uniform multicommodity flow approach of (\cite{leighton1988approximate,shmoys1997cut}) and the semi-definite relaxation of the cut metric (\cite{arora2009expander}).
\cite{hagen1992new} observed that the minimum cut sparsity is bounded by the algebraic connectivity ($\lambda_2$), suggesting the Fiedler vector (i.e. the second eignenvector of $\Lb$) to be an appropriate relaxation of the  characteristic vector of the cut.
Moreover, the well known Cheeger inequality shows that the minimum cut sparsity (in a regular graph) is bounded by the algebraic connectivity (see \cite{chung2004discrete}).
We will follow the tradition of bounding sparsity with the algebraic connectivity, and provide a surrogate estimator to the scan statistic based on this simple spectral relaxation.

\begin{proposition}
Define the Spectral Scan Statistic (SSS) as 
\[
\hat{s} = \sup_{\xb \in \RR^n} (\xb^\top \tilde\yb)^2  \textrm{ s.t. }\xb^\top \Lb \xb \le \rho, \| \xb \| \le 1, \xb^\top \one = 0.
\]
Then the GLR statistic is bounded by the SSS: $\hat g \le \hat s$.
\end{proposition}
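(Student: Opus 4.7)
The plan is to show that every binary vector feasible for the GLR program can be lifted to a real vector feasible for the SSS with an identical objective value. The mechanics are a straightforward change of variables that exploits two identities: the centering matrix $\Kb = \Ib - \frac{1}{n}\one\one^\top$ is an orthogonal projection onto $\one^\perp$ (so $\Kb^2 = \Kb$), and $\Lb\one = 0$ (so $\Lb\Kb = \Kb\Lb = \Lb$ and therefore $\Kb\Lb\Kb = \Lb$). Also, since $\tilde\yb = \Kb\yb$ by definition, we get $\xb^\top \tilde\yb = (\Kb\xb)^\top \tilde\yb$ for any $\xb$.

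Given a feasible $\xb \in \{0,1\}^n$ for the GLR program (with $\xb^\top \Kb \xb > 0$, which holds as long as $\xb$ is not $\zero$ or $\one$), I would define
\[
\zb \;=\; \frac{\Kb \xb}{\sqrt{\xb^\top \Kb \xb}}.
\]
Then $\zb^\top \one = 0$ by construction, and $\|\zb\|^2 = (\xb^\top \Kb^2 \xb)/(\xb^\top \Kb \xb) = 1$, so the two linear/norm constraints of the SSS hold. For the Laplacian constraint, using $\Kb\Lb\Kb = \Lb$,
\[
\zb^\top \Lb \zb \;=\; \frac{\xb^\top \Kb \Lb \Kb \xb}{\xb^\top \Kb \xb} \;=\; \frac{\xb^\top \Lb \xb}{\xb^\top \Kb \xb} \;\le\; \rho,
\]
so $\zb$ is feasible for the SSS.

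Next I would match the objectives. Using $\tilde\yb = \Kb\yb$ and $\Kb^2 = \Kb$,
\[
(\zb^\top \tilde\yb)^2 \;=\; \frac{(\xb^\top \Kb \tilde\yb)^2}{\xb^\top \Kb \xb} \;=\; \frac{(\xb^\top \tilde\yb)^2}{\xb^\top \Kb \xb} \;=\; \frac{\xb^\top \tilde\yb \tilde\yb^\top \xb}{\xb^\top \Kb \xb},
\]
which is exactly the GLR objective evaluated at $\xb$. Taking the supremum on the right over feasible binary $\xb$ and noting that the corresponding $\zb$'s are all feasible for the SSS gives $\hat g \le \hat s$. The degenerate cases $\xb = \zero$ or $\xb = \one$ can be excluded from the GLR maximization since they make $\xb^\top \Kb \xb = 0$ and the objective undefined (or $0$ if one takes a limit convention), so they do not affect the bound.

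There is no real obstacle here; the only subtlety is recognizing that the ratio $\xb^\top \Lb \xb / \xb^\top \Kb \xb$ is a Rayleigh-type quotient on $\one^\perp$, which is why projecting by $\Kb$ and normalizing yields a valid SSS candidate with the same value. The binary constraint on $\xb$ in the GLR is relaxed to membership in the unit ball of $\one^\perp$ in the SSS, so the inequality $\hat g \le \hat s$ is an immediate consequence of enlarging the feasible set.
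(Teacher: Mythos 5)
Your proposal is correct and is essentially the paper's own argument: both exploit the idempotence of $\Kb$, the identity $\Lb \one = 0$, and $\tilde\yb = \Kb\yb$ to replace each feasible indicator vector by its normalized projection $\Kb\xb / \|\Kb\xb\|$ onto $\one^\perp$, which is SSS-feasible with the same objective value. The paper phrases this as rewriting the GLR program in terms of $\Kb\xb$ and then enlarging the feasible set, but the content is identical to your explicit lifting.
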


\begin{proof}
First let us notice that $\Kb = \Ib - \frac{1}{n} \one \one^\top$ is the projection onto the subspace orthogonal to $\one$.  Because $\Kb$ is thus idempotent, $\tilde \yb \one = 0$, and $\Lb \one = 0$ we can rewrite 
\[
\hat g = \max_{\xb \in \{0, 1\}^n\backslash \{\zero,\one \} }\frac{(\Kb \xb)^\top \tilde{\yb}\tilde{\yb}^\top (\Kb \xb)}{(\Kb \xb)^\top (\Kb \xb)}  \textrm{ s.t. } \frac{(\Kb \xb)^\top \Lb (\Kb \xb)}{(\Kb \xb)^\top (\Kb \xb)} \le \rho
\]
So, we have the following relaxation,
\[
\hat g \le \max_{\xb \ne 0, \xb^\top \one = 0} \frac{\xb^\top \tilde{\yb}\tilde{\yb}^\top \xb}{\xb^\top \xb}  \textrm{ s.t. } \frac{\xb^\top \Lb \xb}{\xb^\top \xb} \le \rho = \hat s
\]
\end{proof}

\begin{remark}
By Lagrangian duality and the Courant-Fischer theorem, the spectral scan statistic can be written as
\[
\hat s = \min_{\nu > 0} \chi( \tilde \yb \tilde \yb^\top - \nu \Delta) + \nu \rho
\]
where $\chi(A)$ is the maximum non-zero eigenvalue of the matrix $A$.
\end{remark}

Notice that because the domain $\Xcal = \{ \xb \in \RR^n : \xb^\top \Lb \xb \le \rho, \| \xb \| \le 1, \xb^\top \one = 0 \}$ is symmetric around the origin, this is precisely the square of the solution to
\begin{equation}
\label{eqn:sGP}
\sqrt{\hat{s}} = \sup_{\xb \in \RR^n} \xb^\top \yb  \textrm{ s.t. }\xb^\top \Lb \xb \le \rho, \| \xb \| \le 1, \xb^\top \one = 0,
\end{equation}
where we have used the fact that $\xb^\top \tilde\yb = ( (\Ib - \frac{1}{n}\one \one^\top) \xb )^\top \yb = \xb^\top \yb$ because $\xb^\top \one = 0$ within $\Xcal$. This previous formulation shows that the SSS is related to the supremum of a Gaussian process over $\mathcal{X}$. This fact will turn out to be extremely convenient, as we show next.
\section{Theoretical Analysis}

We first derive a simple condition for asymptotic indistinguishability based on testing the null versus a single component in the alternative. 
A more refined analysis of the lower bound for the general hypothesis \eqref{eq:H0:H1} is beyond the scope of this article. 

\begin{theorem}
\label{thm:lower_bd}
Suppose that there exists $C \in \Ccal$ such that $\frac{|\bar C|}{|C|} \asymp 1$. 
Then $H_0$ and $H_1$ are asymptotically indistinguishable if $\eta / \sigma = o(1)$.
\end{theorem}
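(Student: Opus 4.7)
I plan to prove indistinguishability via the classical two-point Le Cam method: exhibit a single $\theta_0 \in \Theta_0$ and a single $\theta_1 \in \Theta_1$ whose Gaussian laws are so close in total variation that no test can drive both its Type I and Type II errors to zero. Because the alternative is a strict superset of a single point and the null is a strict superset of a single point, it suffices to make one specific point of each close in $TV$.

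\textbf{Construction of the two points.} Using the hypothesis, fix for each $n$ an admissible cluster $C \in \Ccal$ with $|C|\asymp|\bar C|\asymp n$. Pick $\delta_n \in \RR\setminus\{0\}$ such that $\delta_n^2\,|C||\bar C|/n = \eta^2(1+o(1))$; then $\betab := \delta_n {\bf 1}_C$ satisfies $\|\betab-\bar\betab\|$ arbitrarily close to, and strictly greater than, $\eta$, so that $\theta_1 := \betab \in \Theta_1$. On the null side, take $\theta_0 := \bar\betab\,{\bf 1} \in \Theta_0$, the projection of $\theta_1$ onto the constant-signal subspace, which minimizes $\|\theta_0-\theta_1\|$ over $\Theta_0$ and so kills the unbounded nuisance parameter $\mu$.

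\textbf{Bounding the KL and concluding.} Because $P_\theta = N(\theta,\sigma^2\Ib_n)$, the standard Gaussian identity gives
\[
\mathrm{KL}(P_{\theta_1}\,\|\,P_{\theta_0}) \;=\; \frac{\|\theta_1-\theta_0\|^2}{2\sigma^2} \;=\; \frac{\|\betab-\bar\betab\|^2}{2\sigma^2} \;=\; \frac{\eta^2(1+o(1))}{2\sigma^2} \;=\; o(1),
\]
using $\eta/\sigma = o(1)$. Pinsker's inequality then yields $\|P_{\theta_0}-P_{\theta_1}\|_{\mathrm{TV}} \to 0$, so for any test $T$,
\[
\sup_{\theta\in\Theta_0}\PP_\theta\{T=1\} + \sup_{\theta\in\Theta_1}\PP_\theta\{T=0\} \;\ge\; \PP_{\theta_0}\{T=1\} + \PP_{\theta_1}\{T=0\} \;\ge\; 1 - \|P_{\theta_0}-P_{\theta_1}\|_{\mathrm{TV}} \;\to\; 1,
\]
and the two suprema cannot simultaneously vanish, which is exactly asymptotic indistinguishability.

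\textbf{Anticipated obstacle.} There is no deep obstacle: the Gaussian KL identity does all the work, and a finer argument (e.g.\ a mixture over $\Ccal$ via the Ingster–Suslina chi-square trick) would only be required to remove the balance assumption or to push the constant in front of $\eta^2/\sigma^2$. The role of $|\bar C|/|C|\asymp 1$ is merely to guarantee a non-trivial $C\in\Ccal$ so that the two-point reduction is legitimate; the factor $|C||\bar C|/n$ is then absorbed entirely into $\|\betab-\bar\betab\|^2$, leaving $\eta/\sigma$ as the sole scale that controls the total variation. The only small bookkeeping point is verifying that the strict separation $\|\betab-\bar\betab\|>\eta$ defining $\Theta_1$ can be approached from above without affecting the $o(1)$ conclusion, which is immediate.
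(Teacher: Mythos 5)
Your proof is correct, but it takes a different route from the paper's. The paper also reduces to a single fixed $C$, but then argues via the distribution of the likelihood-ratio statistic: it invokes the Neyman--Pearson lemma to claim the test based on $2\log\Lambda_C(\yb)$ is optimal for $H_0$ versus $H_1^C$, observes that this statistic is $\chi^2_1$ under the null and noncentral $\chi^2_1(\lambda)$ with $\lambda=\eta^2/\sigma^2$ under the alternative, and concludes that with vanishing noncentrality no threshold separates the two, so the errors are non-vanishing. You instead run the two-point Le Cam argument directly on the observation distributions: project $\delta_n{\bf 1}_C$ onto the span of ${\bf 1}$ to neutralize the nuisance parameter $\mu$, compute $\mathrm{KL}=\|\betab-\bar\betab\|^2/(2\sigma^2)=\eta^2(1+o(1))/(2\sigma^2)=o(1)$, and apply Pinsker. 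Your version is arguably tighter as a matter of rigor: it bounds the risk of \emph{every} test at once without having to justify optimality of the LR statistic for a problem whose null is composite in $\mu$ (the paper's appeal to Neyman--Pearson there is somewhat informal and really rests on the invariance/MLR discussion earlier in the text), and it makes explicit that the balance condition $|\bar C|/|C|\asymp 1$ is used only to guarantee a nonempty $\Ccal$, not in the quantitative bound. What the paper's route buys in exchange is the distributional picture of the LR statistic itself, which it reuses in the remark following the proof (that power is maximized at $|C|=|\bar C|$ for fixed SNR); your TV bound does not directly yield that observation.
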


The proof is in the appendix.
We will analyze the performance of the SSS statistic by relying on its representation \eqref{eqn:sGP} as the square of the supremum of a Gaussian process. 
We draw heavily on the theory of the generic chaining, perfected in \cite{talagrand2005generic}, which essentially reduces the problem of computing bounds on the expected supremum of Gaussian processes to geometric properties of its index space.
Recall that, under alternative hypothesis, $\| \betab - \bar \betab\| \geq \eta$ uniformly over $\Theta_1$.

\begin{theorem}
  \label{thm:hypo_bd}
  The following hold with probability at least $1 - \delta$.
  Under the null $H_0$
  \[
  \hat s \le \left( \sqrt{2 \sigma^2 \sum_{i > 1} \min\{1, \rho \lambda_i^{-1}\}} + \sqrt{2 \sigma^2 \log\frac{2}{\delta}} \right)^2,
  \]
  while the alternative $H_1$ 
  \[
  \hat s \ge \left( \eta - \sqrt{2 \sigma^2 \log\frac{2}{\delta}} \right)^2.
  \]
\end{theorem}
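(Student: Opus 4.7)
The plan is to exploit the representation $\sqrt{\hat s} = \sup_{\xb \in \Xcal} \xb^\top \yb$ from \eqref{eqn:sGP}, where $\Xcal := \{\xb \in \RR^n : \xb^\top \Lb \xb \le \rho,\ \|\xb\| \le 1,\ \xb^\top \one = 0\}$. Under $H_0$, $\yb = \mu\one + \epsilonb$ and the constraint $\xb^\top \one = 0$ gives $\sqrt{\hat s} = \sup_{\xb \in \Xcal} \xb^\top \epsilonb$, the supremum of a centered Gaussian process on a symmetric convex body of $\ell_2$-diameter at most $2$. I would handle this with two standard pieces: a deterministic bound on the expected supremum, plus Gaussian concentration around that expectation.

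For the expected supremum, diagonalize $\Lb = \sum_i \lambda_i u_i u_i^\top$ with $u_1 \propto \one$ and expand $\xb = \sum_{i \ge 2} c_i u_i$; the projections $z_i := u_i^\top \epsilonb$ are i.i.d.\ $N(0, \sigma^2)$. The constraints become $\sum_{i \ge 2} c_i^2 \le 1$ and $\sum_{i \ge 2} \lambda_i c_i^2 \le \rho$. Setting $a_i := \min\{1, \rho/\lambda_i\}$, splitting the sum at $\lambda_i \le \rho$ shows $\sum_{i \ge 2} c_i^2/a_i \le 2$, so $\Xcal$ embeds (in the $c$-coordinates) into the ellipsoid $\{c : \sum c_i^2/a_i \le 2\}$. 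Cauchy--Schwarz over this ellipsoid gives $\xb^\top \epsilonb = \sum c_i z_i \le \sqrt{2 \sum_{i \ge 2} a_i z_i^2}$, and Jensen's inequality then yields
\[
\mathbb{E} \sup_{\xb \in \Xcal} \xb^\top \epsilonb \le \sqrt{2 \sigma^2 \sum_{i > 1} \min\{1, \rho/\lambda_i\}}.
\]
Borell--TIS (the process variance is at most $\sigma^2$ since $\|\xb\| \le 1$) then gives $\sup_\xb \xb^\top \epsilonb \le \mathbb{E} \sup_\xb \xb^\top \epsilonb + \sqrt{2\sigma^2 \log(2/\delta)}$ with probability $\ge 1 - \delta$, and squaring produces the null statement.

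For the alternative I would use the deterministic witness $\xb^\star := \tilde\betab/\|\tilde\betab\|$. The identities $\tilde\betab = \delta \Kb \one_C$, $\Kb \Lb \Kb = \Lb$ (from $\Lb \one = 0$), $\one_C^\top \Lb \one_C = |\partial C|$, and $\|\tilde\betab\|^2 = \delta^2|C||\bar C|/n$ give
\[
\xb^{\star\top} \Lb \xb^\star = \frac{|\partial C|\, n}{|C||\bar C|} \le \rho
\]
by definition of $\Ccal(\rho)$; together with $\|\xb^\star\| = 1$ and $\xb^{\star\top} \one = 0$, this shows $\xb^\star \in \Xcal$. Since $\tilde\betab^\top \betab = \|\tilde\betab\|^2$,
\[
\sqrt{\hat s} \ge \xb^{\star\top} \yb = \|\tilde\betab\| + \xb^{\star\top}\epsilonb \ge \eta + \xb^{\star\top}\epsilonb,
\]
with $\xb^{\star\top}\epsilonb \sim N(0,\sigma^2)$. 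The two-sided Gaussian tail gives $\xb^{\star\top} \epsilonb \ge -\sqrt{2\sigma^2 \log(2/\delta)}$ with probability $\ge 1 - \delta$, and squaring (in the meaningful regime where $\eta$ exceeds this noise term) yields the alternative bound.

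The only non-routine step is the ellipsoidal majorization of $\Xcal$: the spectral quantity $\sum_{i>1} \min\{1, \rho/\lambda_i\}$ emerges precisely because the two quadratic constraints defining $\Xcal$ can be combined into a single weighted containment $\sum c_i^2/a_i \le 2$, reducing the geometric optimization over $\Xcal$ to a simple $\ell_2$ computation. Everything else is either a standard Gaussian-process concentration or a direct calculation with the Laplacian.
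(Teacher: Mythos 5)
Your proposal is correct and follows essentially the same route as the paper: the same reduction of $\Xcal$ (in eigencoordinates) to a single enlarged ellipsoid via $\sum_i c_i^2/\min\{1,\rho/\lambda_i\} \le 2$, the same Borell--TIS concentration step, and the same witness $\tilde\betab/\|\tilde\betab\|$ under the alternative. The only difference is cosmetic: where the paper invokes Proposition~2.2.1 of Talagrand for the expected supremum over the ellipsoid, you derive that bound directly by Cauchy--Schwarz and Jensen, which is precisely what that proposition amounts to in the ellipsoidal case.
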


\begin{proof}
We use generic chaining to control the process $\{ \xb^\top \yb \}_{\xb \in \Xcal}$ appearing in the SSS. 
First, we notice that the index set $\Xcal$ is the intersection of an ellipsoid and the unit ball, which is the intuition behind the following lemma.
\begin{lemma}
  \label{lem:cheegerBdd}
  Let $\Lb$ have spectrum $\{\lambda_i \}_{i=1}^n$. Then under $H_0$,
  \[
  \EE \sup_{\xb \in \Xcal} \xb^\top \yb \le \sqrt{2 \sigma^2 \sum_{i > 1} \min\{1,\rho \lambda_i^{-1}\}}.
  \]
\end{lemma}

The proof is provided in the appendix.
We then can use the well known phenomena, that the supremum of a Gaussian process concentrates around it's expectation (see the appendix).
Hence, by Lemma \ref{lem:gauss_sup_conc} the first statement in Theorem \ref{thm:hypo_bd} holds.
The second statement follows by applying standard concentration results to the univariate Gaussian  $\frac{\betab - \bar \betab}{\| \betab - \bar \betab \|} \yb$ and noticing that $\frac{\betab - \bar \betab}{\| \betab - \bar \betab \|} \in \Xcal$ and $\EE \frac{(\betab - \bar \betab)^\top}{\| \betab - \bar \betab \|} \yb = \| \betab - \bar \betab \| \ge \eta$ under $H_1$.
\end{proof}

As a corollary we will provide sufficient conditions for asymptotic distinguishability that depend on the spectrum of the Laplacian $\Lb$. As we will show in the next section, these conditions can be applied to a number of graph topologies whose spectral properties are known. 

\begin{corollary}
  The null and alternative, as described in Thm.~\ref{thm:hypo_bd}, are asymptotically distinguished by $\hat s$ and $g_\nu(\yb)$ if
  \begin{equation}
    \label{eqn:dist_bound1}
  \frac{\eta}{\sigma} = \omega \left(\sqrt{\sum_{i > 1} \min\{1, \rho \lambda_i^{-1}\}} \right)
  \end{equation}
  Other stronger sufficient conditions are
  \begin{equation}
    \label{eqn:dist_bound2}
  \frac{\eta}{\sigma} = \omega \left( \sqrt{ k + \frac{(n - k) \rho}{\lambda_{k + 1}} } \right)
  \end{equation}
  if $k$ is large enough that $\lambda_{k+1} > \rho$.
\end{corollary}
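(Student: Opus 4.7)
The plan is to combine the two-sided concentration bound of Theorem~\ref{thm:hypo_bd} with a carefully chosen sequence $\delta_n \to 0$, and then reduce the spectral-sum bound \eqref{eqn:dist_bound1} to the simpler form \eqref{eqn:dist_bound2} by splitting the sum at index $k+1$.

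First I would fix a sequence $\delta_n\to 0$ (to be chosen) and set
$A_n := \sqrt{2\sigma^2 \sum_{i>1}\min\{1,\rho/\lambda_i\}}$ and $B_n := \sqrt{2\sigma^2\log(2/\delta_n)}$,
so that Theorem~\ref{thm:hypo_bd} gives $\hat s \le (A_n+B_n)^2$ under $H_0$ and $\hat s \ge (\eta - B_n)^2$ under $H_1$, each with probability at least $1-\delta_n$. Taking the threshold $\tau_n$ to be any value satisfying $(A_n+B_n)^2 < \tau_n < (\eta - B_n)^2$ yields a test whose type I and type II errors are each at most $\delta_n$, so asymptotic distinguishability holds provided $A_n + 2B_n < \eta$ eventually; equivalently, $\sqrt{\sum_{i>1}\min\{1,\rho/\lambda_i\}} + 2\sqrt{\log(2/\delta_n)} = o(\eta/\sigma)$. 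Under hypothesis \eqref{eqn:dist_bound1} the first summand is $o(\eta/\sigma)$ by assumption, and for the concentration term it suffices to pick $\delta_n\to 0$ slowly, e.g.\ $\log(1/\delta_n) = \sqrt{\eta/\sigma}$, so that $B_n/\sigma=o(\eta/\sigma)$. This last step implicitly uses that $\eta/\sigma \to \infty$, which follows because $\mathcal{C}(\rho)$ being non-empty (otherwise the statement is vacuous) forces $\lambda_2 \le \rho$ via the Rayleigh-quotient argument applied to $\Kb \one_C/\|\Kb \one_C\|$, and hence $\sum_{i>1}\min\{1,\rho/\lambda_i\} \ge 1$ is bounded away from zero.

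For the second sufficient condition, I would simply bound the spectral sum by splitting at index $k+1$: since the $\lambda_i$ are increasing, $\lambda_i \ge \lambda_{k+1}>\rho$ for $i\ge k+1$, so $\min\{1,\rho/\lambda_i\}=\rho/\lambda_i\le \rho/\lambda_{k+1}$, while the remaining terms $i=2,\dots,k$ are each bounded trivially by $1$. This yields
\[
\sum_{i>1}\min\{1,\rho/\lambda_i\} \le (k-1) + (n-k)\,\frac{\rho}{\lambda_{k+1}} \le k + \frac{(n-k)\rho}{\lambda_{k+1}},
\]
so condition \eqref{eqn:dist_bound2} implies \eqref{eqn:dist_bound1}, and the previous argument applies.

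The only point that requires any care is verifying that $\eta/\sigma \to \infty$ so that a valid $\delta_n$ sequence exists; beyond that, the argument is a routine threshold test combined with the deterministic inequality above. I do not anticipate any serious obstacle.
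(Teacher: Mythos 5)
Your proposal is correct and follows essentially the same route as the paper: apply Theorem~\ref{thm:hypo_bd}, place the threshold between the null upper bound and the alternative lower bound, and obtain \eqref{eqn:dist_bound2} by splitting the spectral sum at index $k+1$. The only difference is that you make explicit the choice of $\delta_n \to 0$ and verify $\eta/\sigma \to \infty$ via $\lambda_2 \le \rho$, details the paper leaves implicit in the phrase ``sufficiently far from.''
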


\begin{proof}
To see equation~\eqref{eqn:dist_bound1} we note that, due to Theorem \ref{thm:hypo_bd}, if 
\[
\sqrt{2 \sigma^2 \sum_{i > 1} \min\{1, \rho \lambda_i^{-1}\}} + \sqrt{2 \sigma^2 \log\frac{2}{\delta}} = o \left(\eta - \sqrt{2 \sigma^2 \log\frac{2}{\delta}} \right)
\]
then we attain asymptotic distinguishability by choosing any threshold $\tau$ between, and sufficiently far from, the left and right hand side of the previous display.
To show equation~\eqref{eqn:dist_bound2} we note that by choosing $k$ such that $\lambda_{k+1} > \rho$ we see that 
\[
\sum_{1 < i \le k} \min\{1, \rho \lambda_i^{-1}\} \le k \Rightarrow
\sum_{i > k} \min\{1, \rho \lambda_i^{-1}\} \le (n - k) \frac{\rho}{\lambda_{k+1}}.
\vspace{-.2in}
\]
\vspace{-.1in}
\end{proof}

Interestingly, there are no logarithmic terms in \eqref{eqn:dist_bound1} that usually accompany uniform bounds of this type, which is attributed to the generic chaining.
Notice that the left hand side of \eqref{eqn:dist_bound1} is always less than $\sqrt{n-1}$, which we will see characterizes the performance of the naive estimator $\| \tilde \yb \|$.


For comparison, we consider the performance of two naive procedure for detection: the energy detector, which reject $H_0$ if $\| \tilde \yb\|^2$ is too large and the edge thresholding detector, which reject $H_0$ if $\max_{(v,w) \in E} | \yb_v - \yb_w |$ is large.
\begin{theorem}
\label{thm:energy}
 $H_0$ and $H_1$ are asymptotically distinguished by $\| \tilde\yb \|$ if and only if 
\[
\frac{\eta}{\sigma} = \omega(\sqrt{n-1}).
\]  
\end{theorem}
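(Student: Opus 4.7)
The plan is to mirror the proof of Theorem~\ref{thm:hypo_bd}, treating $\|\tilde{\yb}\|$ as the supremum of a Gaussian process over the index set obtained by removing the Laplacian constraint from $\Xcal$. Concretely,
\[
\|\tilde{\yb}\| \;=\; \sup_{\xb^\top \xb \le 1,\,\xb^\top \one = 0} \xb^\top \yb,
\]
so $\|\tilde{\yb}\|$ is exactly the square root of the SSS with $\rho$ sent to infinity, and the concentration machinery assembled for the SSS carries over with only superficial modifications.

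For the \emph{if} direction, under $H_0$ I would use $\|\tilde{\yb}\| = \|\Kb\epsilonb\|$ with $\Kb$ the orthogonal projection onto $\one^\perp$, and apply Jensen to obtain $\EE \|\Kb \epsilonb\| \le \sqrt{\EE \|\Kb\epsilonb\|^2} = \sigma\sqrt{n-1}$. Gaussian Lipschitz concentration (the same device used after Lemma~\ref{lem:cheegerBdd}) then yields $\|\tilde{\yb}\| \le \sigma\sqrt{n-1} + \sigma\sqrt{2\log(2/\delta)}$ with probability $1-\delta$ under $H_0$. Under $H_1$, plugging the feasible direction $\xb = (\betab - \bar\betab)/\|\betab - \bar\betab\|$ into the sup reduces to a univariate Gaussian with mean $\|\betab - \bar\betab\| \ge \eta$, so a standard tail bound gives $\|\tilde{\yb}\| \ge \eta - \sigma\sqrt{2\log(2/\delta)}$ with probability $1-\delta$. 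Any threshold strictly between these two bounds asymptotically distinguishes $H_0$ from $H_1$ provided $\eta/\sigma = \omega(\sqrt{n-1})$.

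The hard part is the converse \emph{only if} direction, which requires showing that whenever $\eta/\sigma$ is not $\omega(\sqrt{n-1})$ no threshold $\tau$ can simultaneously drive $\PP_{H_0}(\|\tilde{\yb}\| > \tau)$ and $\sup_{\theta \in \Theta_1} \PP_\theta(\|\tilde{\yb}\| \le \tau)$ to zero. The natural route is to examine $\|\tilde{\yb}\|^2/\sigma^2$ directly: it is $\chi^2_{n-1}$ under $H_0$ and noncentral $\chi^2_{n-1}(\eta^2/\sigma^2)$ under the worst-case alternative realized at $\|\betab - \bar\betab\| = \eta$. One would then either execute a Neyman--Pearson comparison or bound an information-theoretic divergence (total variation or Hellinger) between the two chi-squared laws. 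The main obstacle is that a crude mean-versus-standard-deviation argument does not immediately deliver the claimed $\sqrt{n-1}$ rate --- both laws have standard deviation of order $\sigma^2\sqrt{n}$ for $\|\tilde{\yb}\|^2$ --- so tracking how the noncentrality parameter sharpens the relevant critical region, and confirming that the break-even point falls exactly at the $\sqrt{n-1}$ scale, is the delicate part of the argument.
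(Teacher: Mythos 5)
Your \emph{if} direction is correct and takes a genuinely different route from the paper's: the paper works with $\|\tilde\yb\|^2/\sigma^2$ directly, observing that it is $\chi^2_{n-1}$ under $H_0$ and noncentral $\chi^2_{n-1}(\lambda)$ with $\lambda = \|\betab-\bar\betab\|^2/\sigma^2 \ge \eta^2/\sigma^2$ under $H_1^C$, and then invokes standard chi-square tail bounds; you instead treat $\|\tilde\yb\|$ as the supremum of a Gaussian process over the unit ball in $\one^\perp$ and reuse the Jensen-plus-concentration machinery from Theorem~\ref{thm:hypo_bd}. Both establish sufficiency of $\eta/\sigma = \omega(\sqrt{n-1})$; yours has the advantage of exhibiting Theorem~\ref{thm:energy} as the $\rho = \infty$ instance of the SSS analysis (every $\min\{1,\rho\lambda_i^{-1}\}$ becomes $1$), while the paper's is the classical $\chi^2$ computation.

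The \emph{only if} direction, however, you do not prove, and the obstacle you flag is real and cannot be argued away. Your own accounting shows why: under $H_0$ the statistic $\|\tilde\yb\|^2/\sigma^2$ concentrates at $n-1$ with fluctuations of order $\sqrt{n}$, and under the worst-case alternative it concentrates at $n-1+\eta^2/\sigma^2$ with fluctuations of the same order (so long as $\eta^2/\sigma^2 = O(n)$). A threshold midway between the two means therefore already drives both error probabilities to zero as soon as $\eta^2/\sigma^2 = \omega(\sqrt{n-1})$, i.e.\ $\eta/\sigma = \omega((n-1)^{1/4})$ --- a strictly weaker requirement than $\omega(\sqrt{n-1})$. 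So the ``break-even point'' you hope to locate at the $\sqrt{n-1}$ scale actually sits at the $n^{1/4}$ scale, and the necessity claim at the $\sqrt{n-1}$ scale cannot be delivered by a Neyman--Pearson or divergence comparison of the two chi-squared laws. For what it is worth, the paper's own proof does not close this gap either: after establishing sufficiency it simply asserts that standard chi-square tail bounds yield the ``if and only if,'' with no argument for necessity. Your instinct that the delicate step is exactly here is sound; the honest conclusion is that only the ``if'' half of the statement is supported by either your argument or the paper's.
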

The proof (given in the appendix) is a standard $\chi^2$ analysis.
In \cite{sharpnacksparsistency} the authors examined the problem of exact recovery of cluster boundaries in the graph-structured normal means problem by taking differences between observations corresponding to adjacent nodes.
The following result stems from Theorem 2.1 of \cite{sharpnacksparsistency}, and the fact that $|C||\bar C| / n$ scales like $\min\{|C|,|\bar C|\}$ up to a factor of $2$.
\begin{theorem}
\label{thm:edge_thresh}
$H_0$ and $H_1$ are asymptotically distinguished by $\max_{(v,w) \in E} | \yb_v - \yb_w |$ if
\[
\frac{\eta}{\sigma} = \omega \left(\sqrt{\max_{C \in \Ccal, |C| \le n/2} |C| \log n } \right).
\]
\end{theorem}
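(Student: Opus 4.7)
The plan is to split into null and alternative analyses, apply a standard Gaussian maximum bound, and then convert the resulting condition on $|\delta|/\sigma$ into the stated condition on $\eta/\sigma$ using the definition of the SNR.

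Under $H_0$, for each edge $(v,w) \in E$ one has $\yb_v - \yb_w = \epsilon_v - \epsilon_w \sim N(0, 2\sigma^2)$, free of the nuisance parameter $\mu$. Using the trivial bound $|E| \le \binom{n}{2}$ together with a Gaussian union bound (or the standard formula for the expectation of the maximum of $m$ centered Gaussians plus Borell-type concentration), one obtains
\[
\max_{(v,w) \in E} |\yb_v - \yb_w| \le 2\sigma\sqrt{2\log n}\,(1+o(1))
\]
with probability tending to $1$. This will give the null control for any threshold $\tau$ growing faster than $\sigma\sqrt{\log n}$.

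Under $H_1$ with some $C \in \Ccal$, the boundary $\partial C$ is nonempty because $G$ is connected and $C \notin \{\emptyset, V\}$. Picking any one edge $(v,w) \in \partial C$, one has $\yb_v - \yb_w = \pm\delta + (\epsilon_v - \epsilon_w)$, so
\[
\max_{(v,w) \in E} |\yb_v - \yb_w| \ge |\delta| - |\epsilon_v - \epsilon_w| \ge |\delta| - c\,\sigma
\]
with probability tending to $1$ for an absolute constant $c$. Combined with the null bound, asymptotic distinguishability via thresholding at $\tau \asymp \sigma\sqrt{\log n}$ holds provided $|\delta|/\sigma = \omega(\sqrt{\log n})$, uniformly over $\Theta_1$.

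It remains to translate this into a condition on $\eta/\sigma$. By definition of the class $\Theta_1$, $\eta \le \|\betab - \bar\betab\| = \sqrt{|C||\bar C|/n}\,|\delta|$, and since the map $C \mapsto \bar C$ preserves $\Ccal$ we may assume $|C| \le n/2$, whence $|\bar C|/n \ge 1/2$ and
\[
\tfrac{1}{2}|C| \le \frac{|C||\bar C|}{n} \le |C|.
\]
Consequently $|\delta|/\sigma \ge (\eta/\sigma)\,\sqrt{2/|C|}$, so the uniform requirement $|\delta|/\sigma = \omega(\sqrt{\log n})$ over all $C \in \Ccal$ with $|C| \le n/2$ is implied by the stated condition $\eta/\sigma = \omega\bigl(\sqrt{\max_{C \in \Ccal,\,|C|\le n/2} |C|\log n}\bigr)$. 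The only mildly delicate step is keeping track of the two-sided scaling $|C||\bar C|/n \asymp \min\{|C|,|\bar C|\}$; the rest is a routine Gaussian extremal calculation already encapsulated in Theorem~2.1 of \cite{sharpnacksparsistency}, from which one can read off the result.
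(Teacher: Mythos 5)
Your proof is correct and follows essentially the same route as the paper, which simply invokes Theorem~2.1 of \cite{sharpnacksparsistency} (the Gaussian maximum bound over edge differences that you write out explicitly) together with the same observation that $|C||\bar C|/n \asymp \min\{|C|,|\bar C|\}$ up to a factor of~$2$. The only blemish is the constant in ``$|\delta|/\sigma \ge (\eta/\sigma)\sqrt{2/|C|}$'': lower-bounding $|\delta|$ requires the upper bound $|C||\bar C|/n \le |C|$, giving $\sqrt{1/|C|}$ rather than $\sqrt{2/|C|}$, but this is immaterial to the $\omega(\cdot)$ conclusion.
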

If $\Ccal$ contains balanced clusters, i.e. bipartitions $(C,\bar C)$ such that $\frac{|C|}{|\bar C|} \asymp 1$, then this result matches the scaling in Theorem \ref{thm:energy} up to a log factor.


\section{Specific Graph Models}
In this section we demonstrate the power and flexibility of Theorem \ref{thm:hypo_bd} by analyzing in detail the performance of the spectral scan statistic over three important graph topologies: balanced binary trees, the s-dimensional lattice and the Kronecker graphs (see \cite{leskovec2007scalable,leskovec2010kronecker}).

\subsection{Balanced Binary Trees}
We begin the analysis of the spectral scan statistic by applying it to the balanced binary tree (BBT) of depth $\ell$.
The class of signals that we will consider have clusters of constant signal which are subtrees of size at least $c n^\alpha$ for $0<c\le 1/2, 0<\alpha\le 1$.
Hence, the cut size of the signals are $1$ and $\rho = [cn^\alpha(1 - cn^{\alpha-1})]^{-1}$.

\begin{corollary}
\label{cor:BBT}
For the balanced binary tree with $n$ vertices, the spectral scan statistic can asymptotically distinguish $H_0$ from signals with $\rho = n[cn^\alpha(n - cn^\alpha)]^{-1}$ if the SNR is stronger than
\[
\frac{\eta}{\sigma} = \omega ( n^\frac{1-\alpha}{2} \log n ).
\]
\end{corollary}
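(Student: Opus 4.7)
The plan is to specialize condition \eqref{eqn:dist_bound1} of the corollary to Theorem~\ref{thm:hypo_bd} to the BBT Laplacian, reducing the problem to estimating the spectral sum $\sum_{i>1}\min\{1,\rho/\lambda_i\}$. First I would pin down $\rho$: because each admissible $C$ is a subtree, its boundary has a single edge, and the size bound $|C|\ge cn^\alpha$ (together with $|\bar C|\asymp n$ for $\alpha<1$) forces $\rho\asymp n^{-\alpha}$, matching the expression stated in the setup.

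Second, I would bound the spectral sum by the elementary estimate
$$\sum_{i>1}\min\{1,\rho/\lambda_i\}\;\le\;\rho\sum_{i>1}\lambda_i^{-1}$$
and evaluate the right-hand side through the Kirchhoff-index identity $\sum_{i>1}\lambda_i^{-1}=n^{-1}\sum_{\{u,v\}} R_{\mathrm{eff}}(u,v)$. Because the BBT is a tree, $R_{\mathrm{eff}}(u,v)$ equals the graph distance $d(u,v)$, so the problem becomes the combinatorial one of summing all pairwise distances. A recursive count over depth $\ell\asymp\log_2 n$ (split at the root into two copies of depth $\ell-1$; the $\Theta(n^2)$ cross-root pairs contribute paths of average length $\Theta(\log n)$) yields $\sum_{\{u,v\}} d(u,v)=\Theta(n^2\log n)$, and hence $\sum_{i>1}\lambda_i^{-1}=\Theta(n\log n)$.

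Combining the two gives $\sum_{i>1}\min\{1,\rho/\lambda_i\}=O(n^{1-\alpha}\log n)$, and plugging into \eqref{eqn:dist_bound1} yields asymptotic distinguishability whenever $\eta/\sigma=\omega(n^{(1-\alpha)/2}\sqrt{\log n})$; the $\sqrt{\log n}$ versus the stated $\log n$ is absorbed into the $\sqrt{\log(2/\delta)}$ tail factor of Theorem~\ref{thm:hypo_bd} once $\delta$ is taken to zero polynomially in $n$.

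The main obstacle is the Kirchhoff-index estimate: although the identity and the tree-distance translation are textbook, one needs to run the recursion carefully to pin down the tight $\Theta(n^2\log n)$, since the logarithm is generated by the typical cross-root pair rather than by the diameter alone. If that bookkeeping proves delicate, an alternative route is to apply the coarser condition \eqref{eqn:dist_bound2} with $k$ equal to the number of eigenvalues below $\rho$, bounding $k=O(n^{1-\alpha})$ by exhibiting $\Theta(n^{1-\alpha})$ BBT cuts of sparsity $O(n^{-\alpha})$ obtained by slicing into subtrees of size $\Theta(n^\alpha)$, and lower-bounding $\lambda_{k+1}$ via a Cheeger-type inequality on the resulting quotient graph.
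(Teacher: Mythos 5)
Your argument is correct, but it proceeds by a genuinely different route than the paper. The paper's proof works directly with the known spectral structure of the balanced binary tree: it invokes Molitierno et al.'s tight bound on the algebraic connectivity, Rojo's factorization of the characteristic polynomial of $\Lb$ into polynomials $p_i$ with controlled smallest roots, and from that factorization counts how many eigenvalues can have reciprocal exceeding $2^{\ell-j}$ (at most $\ell 2^j$). Splitting the spectral sum at $k \asymp \ell\, 2^{\ell(1-\alpha)}$ then gives $\sum_{i>1}\min\{1,\rho\lambda_i^{-1}\} = O(n^{1-\alpha}(\log n)^2)$, whence the stated $\omega(n^{(1-\alpha)/2}\log n)$. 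Your route instead drops the truncation entirely, bounds $\min\{1,\rho/\lambda_i\}\le\rho/\lambda_i$, and evaluates $\sum_{i>1}\lambda_i^{-1}$ via the Kirchhoff index, which for a tree reduces to the Wiener index $\sum_{u<v}d(u,v)=\Theta(n^2\log n)$; this yields the sharper bound $O(n^{1-\alpha}\log n)$ on the spectral sum and hence the weaker sufficient condition $\eta/\sigma=\omega(n^{(1-\alpha)/2}\sqrt{\log n})$, which of course implies the corollary as stated. What your approach buys is both a cleaner argument (no need for the fine-grained eigenvalue multiplicity bookkeeping, only the classical resistance--distance identity on trees and an elementary recursion for the Wiener index) and a logarithmic improvement; what the paper's approach buys is machinery (the eigenvalue-counting estimates) that is reusable for the truncated condition \eqref{eqn:dist_bound2} on other graphs. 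One small quibble: your closing remark that the $\sqrt{\log n}$-versus-$\log n$ discrepancy is ``absorbed into the $\sqrt{\log(2/\delta)}$ tail factor'' is unnecessary and slightly confused --- nothing needs absorbing, since your sufficient condition is strictly weaker than the one claimed, so the stated hypothesis trivially implies it.
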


We simulate the probability of correct discovery of change-points (rejecting $H_0$ when the truth is $H_1$) versus the probability of false alarm (falsely rejecting $H_0$).
These are given for the four estimators in Figure \ref{fig1} and for the SSS as $n = 2^{\ell + 1} - 1$ increases.
In these simulations a subtree at level $2$ (of size $n/4$) was chosen as $C$, the gap-to-noise ratio is fixed at $\delta/\sigma = 0.8$, and $\rho = 4/n$.
We see that even in the low $n$ regime, exploiting the graph structure is essential to improve the power of testing $H_0$ against $H_1$.
As $n$ increases with $\delta/\sigma$ fixed the performance of the SSS dramatically increases.

\begin{figure}[h]
\centering
\mbox{
\subfigure{\includegraphics[width=2.1in]{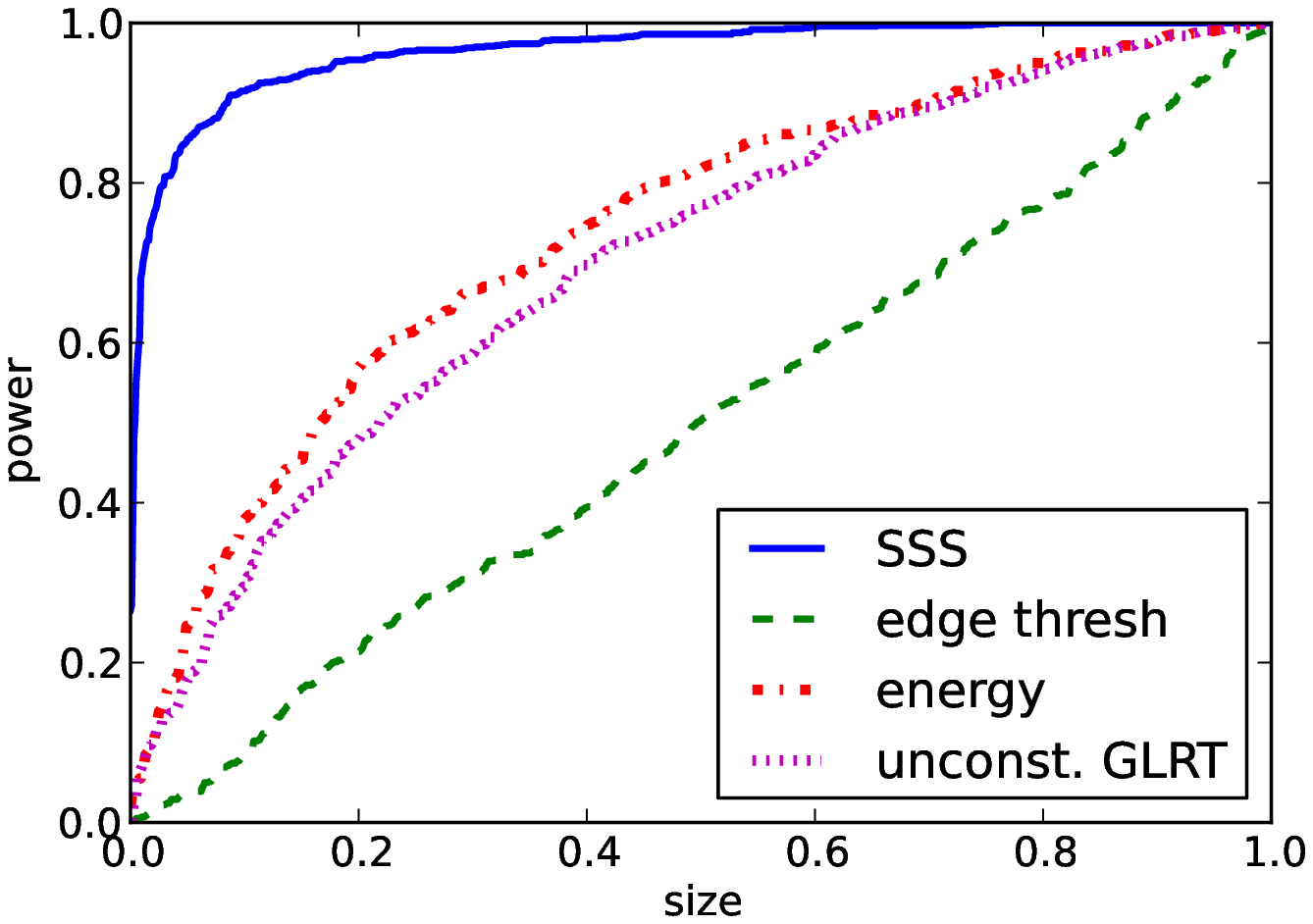}}\hspace{-.1in}
\subfigure{\includegraphics[width=2.1in]{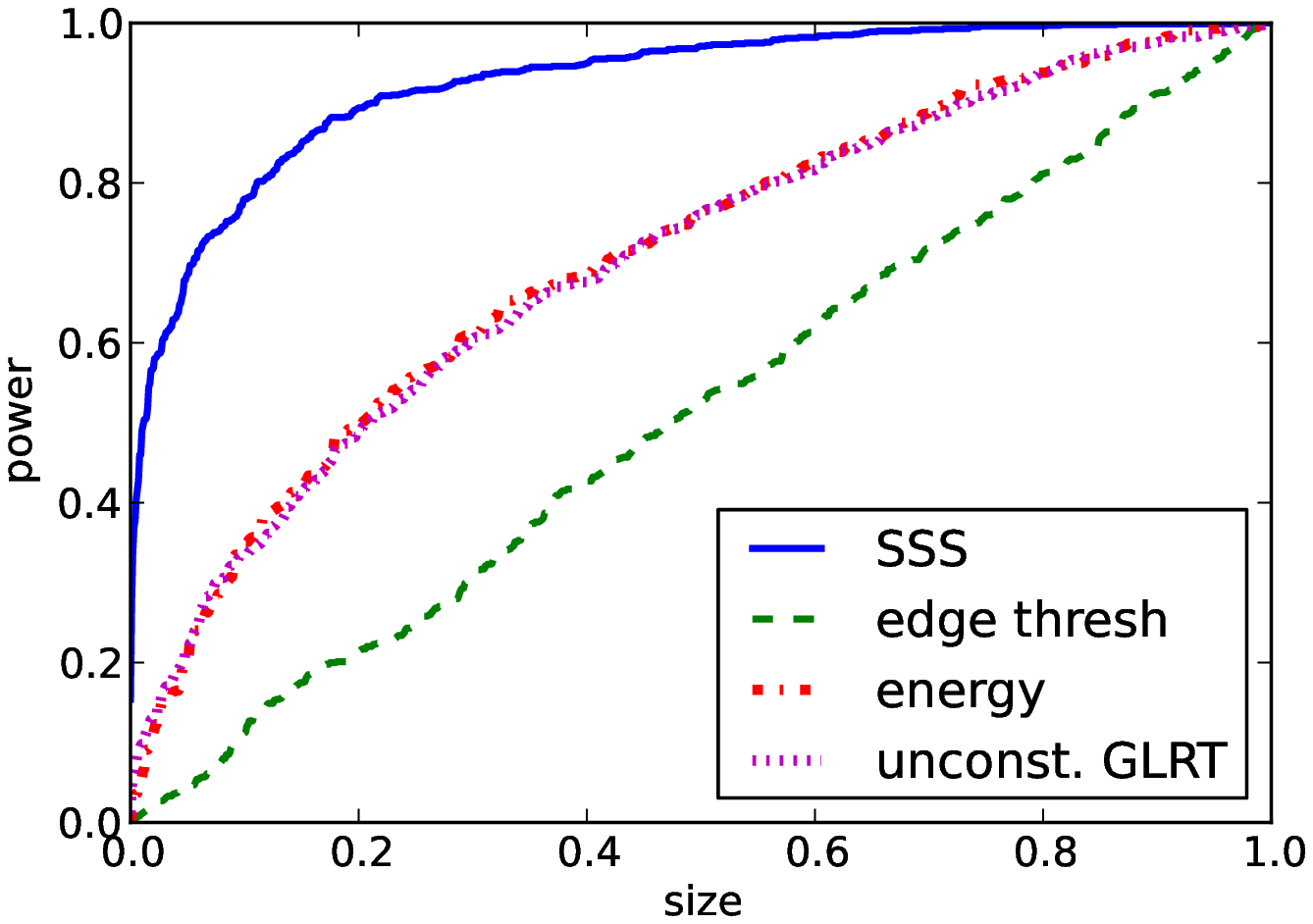}}\hspace{-.1in}
\subfigure{\includegraphics[width=2.1in]{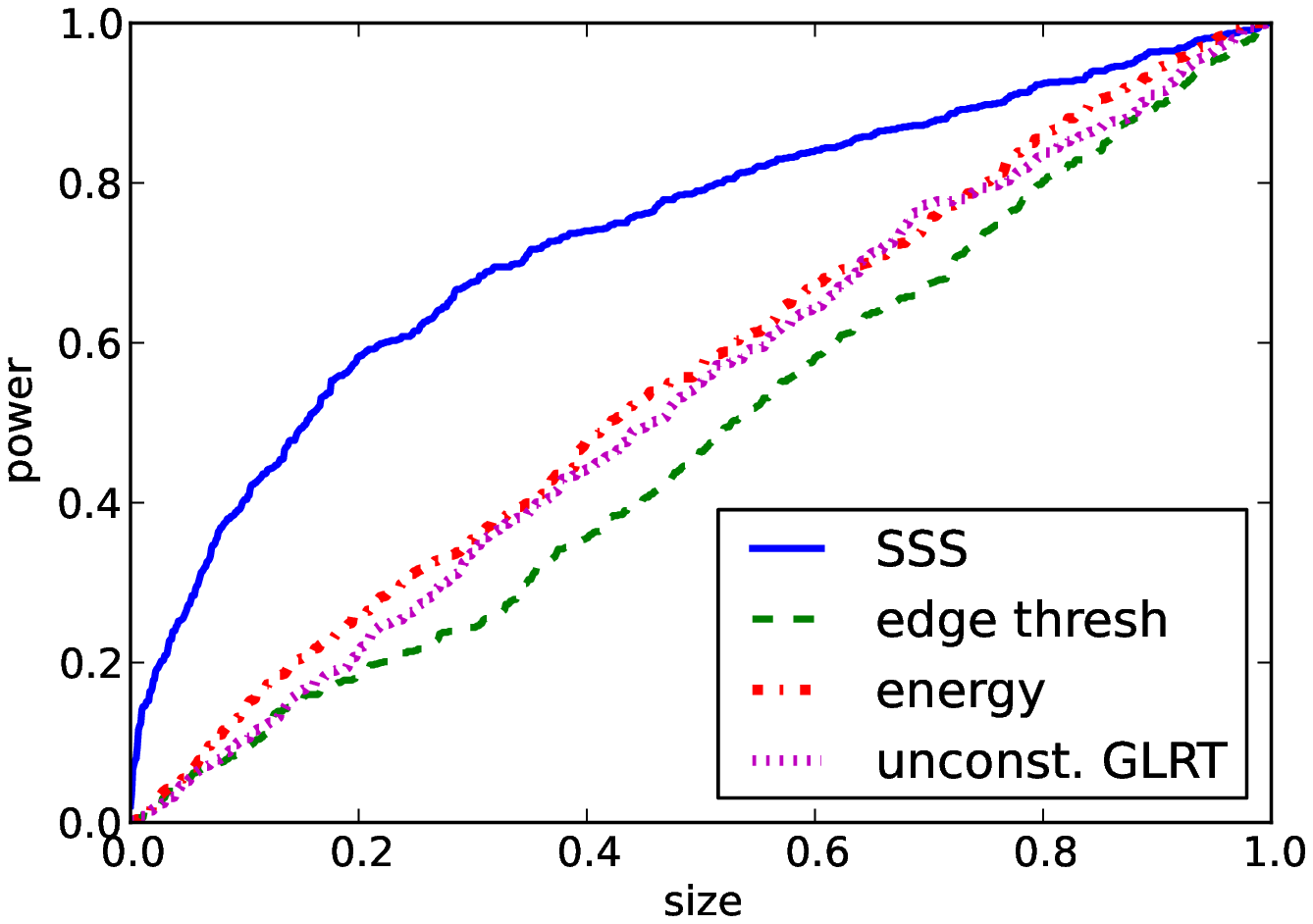}}
}
\vspace{-.1in}
\mbox{
\subfigure{\includegraphics[width=2.1in]{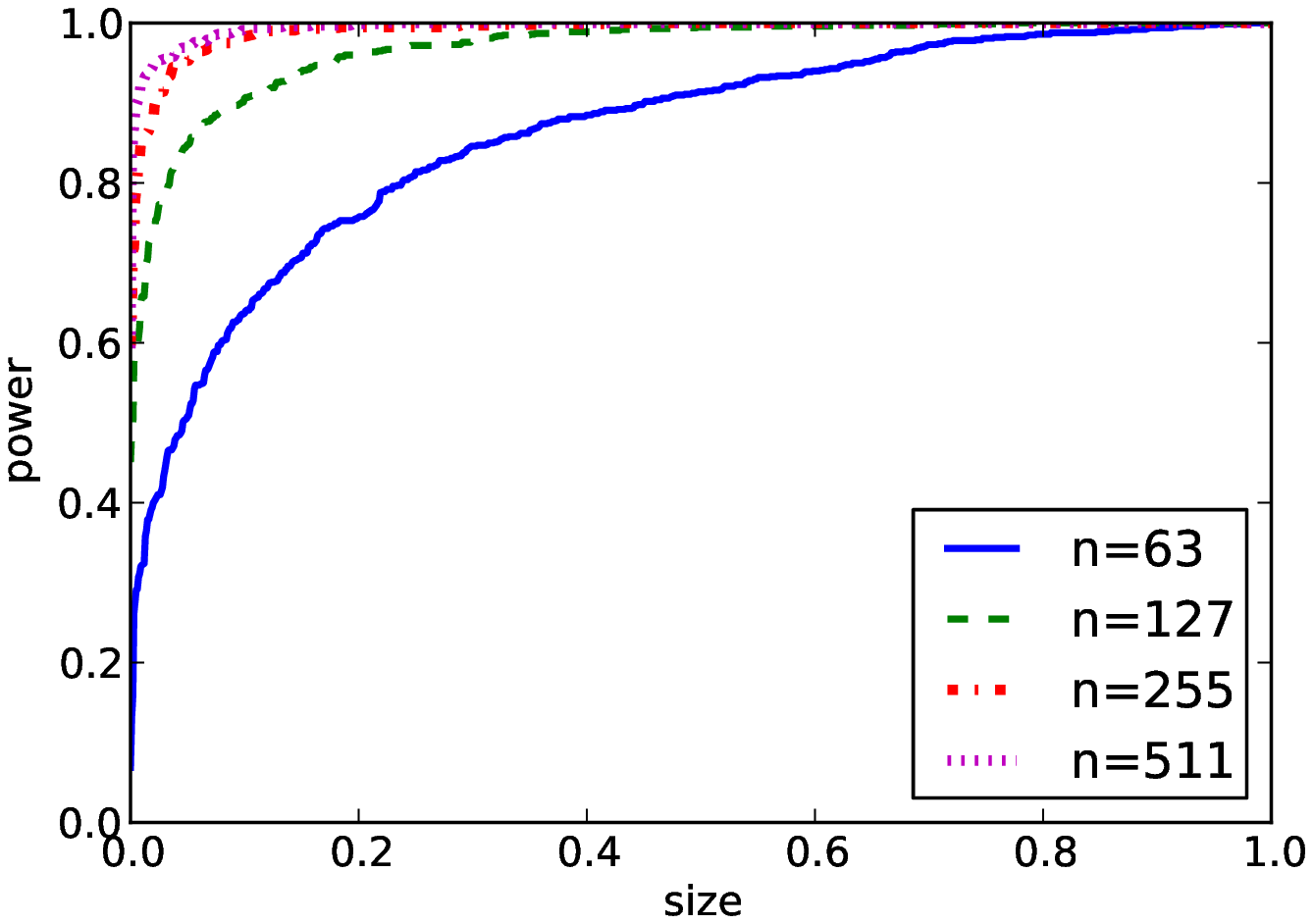}}\hspace{-.1in}
\subfigure{\includegraphics[width=2.1in]{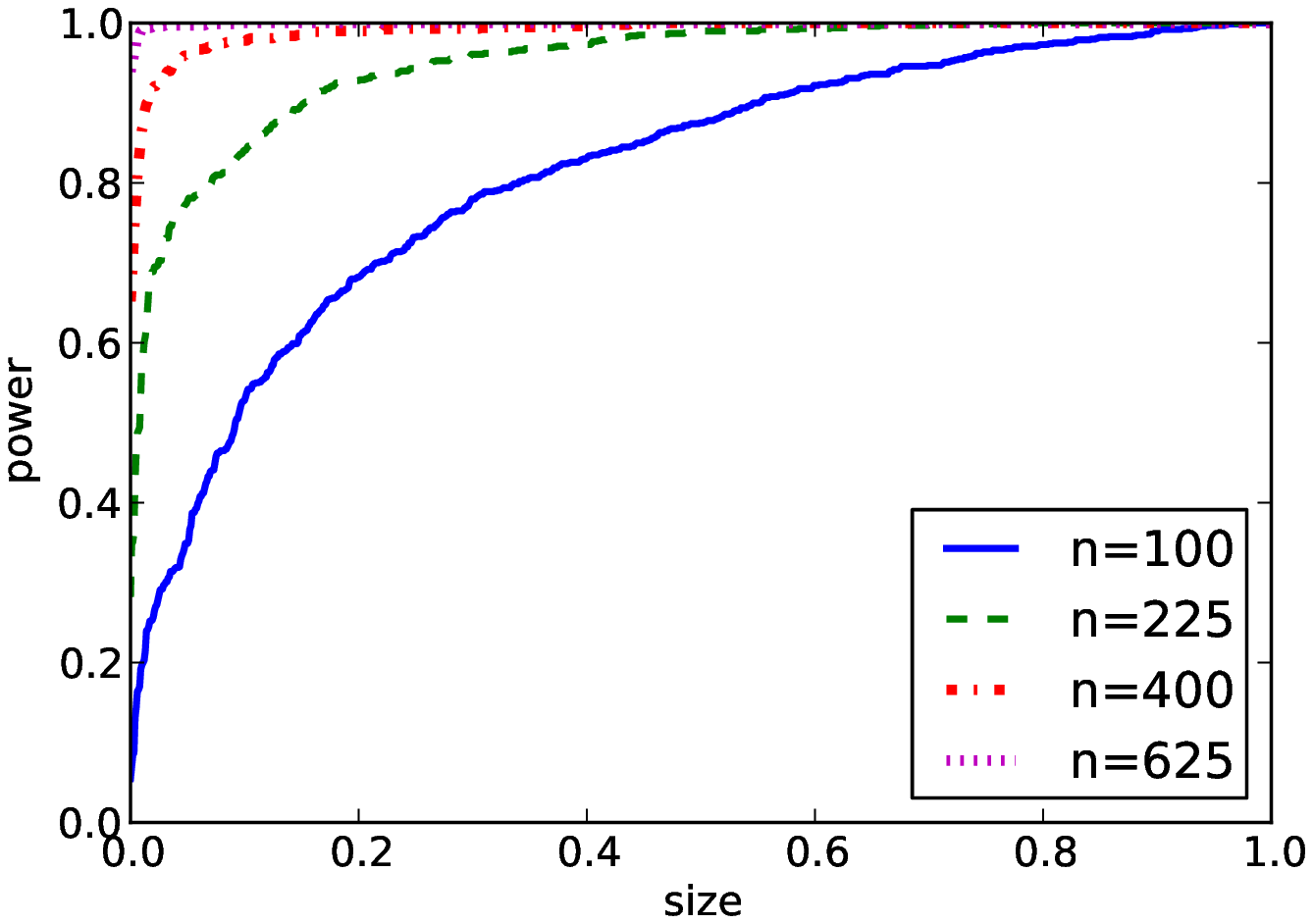}}\hspace{-.1in}
\subfigure{\includegraphics[width=2.1in]{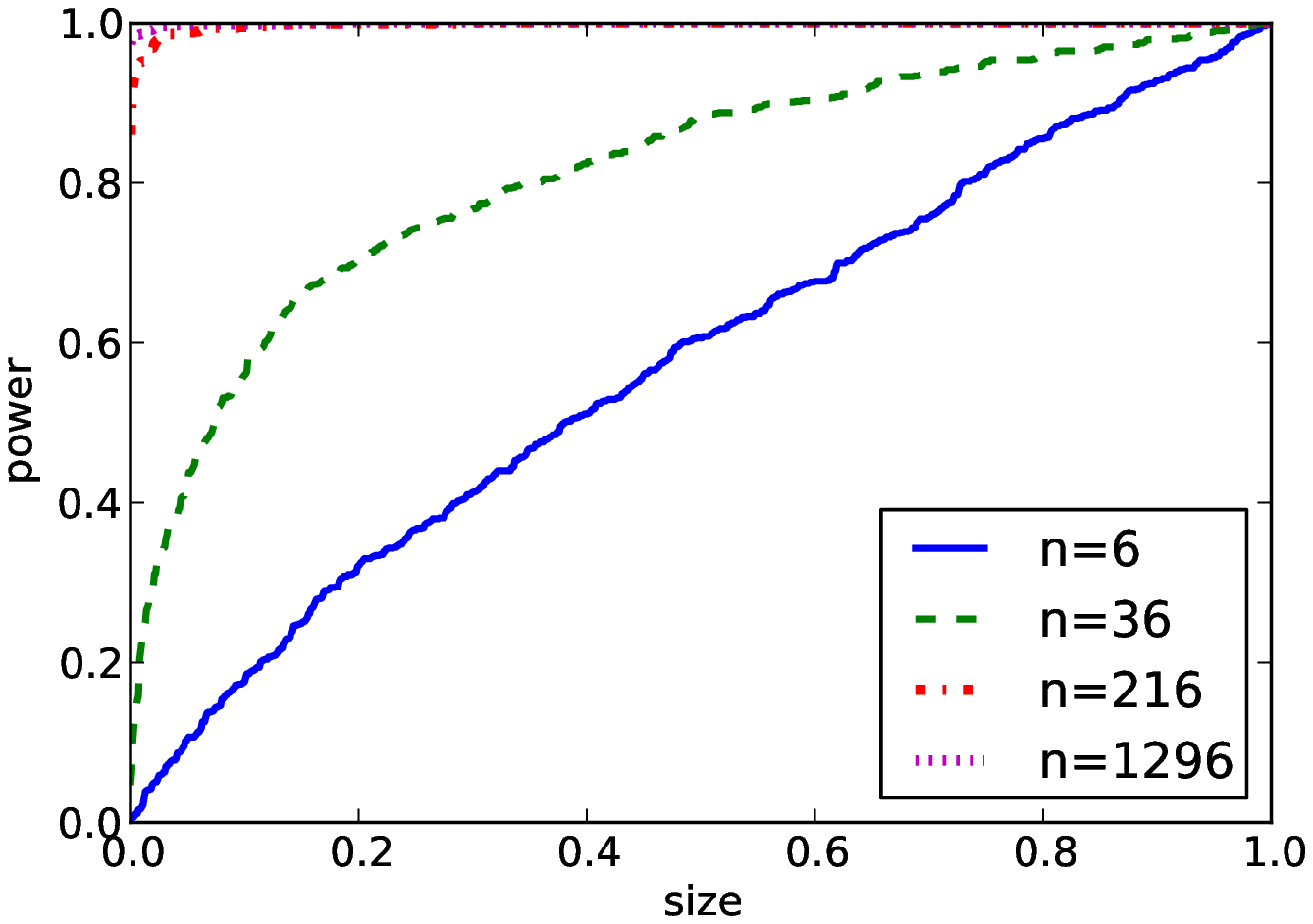}}
}
\vspace{-.1in}
\caption{\small{Above: the simulated probability of correct discovery (power) against false alarm (size) of the SSS compared to the energy detector, edge thresholding and the unconstrained GLRT of the BBT (left), Lattice (middle), and Kronecker graph (right). Below: the performance as n increases.}}
\label{fig1}
\end{figure}

\subsection{Lattice}

We will analyze the performance guarantees of the SSS over the 2-dimensional lattice graph with $p$ vertices along each dimension ($n = p^2$).
We will assume that $\rho = C n^{-1/2}$, as this is the cut sparsity of rectangles that have a low surface area to volume ratio.
By a simple Fourier analysis (see \cite{sharpnack2010identifying}), we know that the Laplacian eigenvalues are $2 (2 - \cos (2 \pi i_1/p) - \cos (2 \pi i_2 / p))$ for all $i_1,i_2 \in [p]$.
We will appeal to \eqref{eqn:dist_bound2}.
Because $1 - \cos (2 \pi i_1 /p) \approx (2 \pi i_1 /p)^2$ for $i_1 << p$, if we rewrite $i = (i_1,i_2)$ for $i_1,i_2 \in [p]$ then $\lambda_{(i_1,i_2)} \approx \frac{8 \pi^2}{n} (i_1^2 + i_2^2)$.
Hence, 
\[
k \approx |\{ (i_1, i_2) : i_1^2 + i_2^2 \le \frac{n}{8 \pi^2} \lambda_{k+1} \}| \le |\{i_1 : i_1^2 \le \frac{n}{8 \pi^2} \lambda_{k+1}\}|^2 = \lceil \frac{n}{8 \pi^2} \lambda_{k+1} \rceil
\]
Then by choosing $\lambda_{k+1} \asymp \sqrt \rho$ the term in the root of the LHS of \eqref{eqn:dist_bound2} is bounded by, $\lceil \frac{n}{8 \pi^2} \lambda_{k+1} \rceil + \frac{\rho n}{\lambda_{k+1}} \asymp n \sqrt{\rho} \asymp n^{3/4}$
modulo lower order terms.
We arrive at the following conclusion,
\begin{corollary}
\label{cor:lattice}
  For the $p \times p$ square lattice, the spectral scan statistic can asymptotically distinguish $H_0$ from signals with cut size $C n^{-1/2}$ if the SNR is stronger than,
\[
\frac{\eta}{\sigma} = \omega ( n^{3/8} )
\]
\end{corollary}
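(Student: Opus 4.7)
The plan is to invoke the sufficient condition \eqref{eqn:dist_bound2} and to optimize over the truncation index $k$. Three ingredients are needed: a closed form for the spectrum of the lattice Laplacian, a count of the number of eigenvalues below a given threshold, and an optimization to select the best $k$. The informal computation immediately preceding the corollary lays this out, so my job is mainly to make the estimates precise and to verify the regime of validity of the approximations.

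For the first ingredient, I would recall that the combinatorial Laplacian of the $p \times p$ lattice is a Kronecker sum of two one-dimensional path Laplacians, so its eigenvalues admit the closed form $\lambda_{(i_1,i_2)} = 2(2 - \cos(2\pi i_1/p) - \cos(2\pi i_2/p))$ for $(i_1, i_2) \in \{0, \ldots, p-1\}^2$. From $1 - \cos x = \tfrac{x^2}{2} + O(x^4)$, in the regime $\max(i_1, i_2) = o(p)$ this simplifies to $\lambda_{(i_1,i_2)} = \tfrac{4\pi^2}{n}(i_1^2 + i_2^2)(1 + o(1))$.

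For the second, I would count eigenvalues below a threshold $t$ by counting nonnegative integer lattice points in the disk $\{i_1^2 + i_2^2 \le tn/(4\pi^2)\}$; the classical Gauss circle estimate yields $|\{\lambda_{(i_1,i_2)} \le t\}| \asymp tn$ as long as $tn \to \infty$. Setting $\lambda_{k+1} = \lambda$ thus gives $k \asymp n\lambda$, so the expression inside the square root of \eqref{eqn:dist_bound2} becomes
\[
k + \frac{(n-k)\rho}{\lambda_{k+1}} \asymp n\lambda + \frac{n\rho}{\lambda},
\]
which is minimized by $\lambda \asymp \sqrt{\rho}$. This choice satisfies $\lambda > \rho$ for large $n$ since $\rho \to 0$, and yields value $\asymp n\sqrt{\rho} = n \cdot n^{-1/4} = n^{3/4}$; taking square roots gives the stated SNR condition $\eta/\sigma = \omega(n^{3/8})$.

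The one technical point worth checking is that the Taylor linearization is valid throughout the relevant range. Because $\lambda_{k+1} \asymp n^{-1/4} \to 0$, the eigenvalue indices that contribute to the count satisfy $i_1^2 + i_2^2 \lesssim n^{3/4}$, giving $\max(i_1, i_2) = O(n^{3/8}) = o(\sqrt{n}) = o(p)$, so the $O(x^4)$ remainder in the cosine expansion is of strictly lower order than the retained $O(x^2)$ term. The boundary corrections in the Gauss circle count, of order $\sqrt{tn}$, are likewise negligible compared to the main term $tn \asymp n\sqrt{\rho}$. Beyond these routine verifications I do not anticipate any serious obstacle; everything else is elementary algebra already displayed above.
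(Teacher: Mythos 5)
Your proposal is correct and follows essentially the same route as the paper: invoking the sufficient condition \eqref{eqn:dist_bound2}, using the explicit cosine form of the lattice Laplacian spectrum, Taylor-expanding to count eigenvalues below a threshold as $\asymp tn$, and choosing $\lambda_{k+1}\asymp\sqrt{\rho}$ to get $n\sqrt{\rho}\asymp n^{3/4}$ inside the root. Your added verifications (validity of the linearization over the contributing indices and negligibility of the Gauss-circle boundary term) are sound refinements of details the paper treats informally.
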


We demonstrate the improvement of the SSS over competing tests in Figure \ref{fig1}.
In these simulations a $\sqrt n / 2 \times \sqrt n / 2$ square was chosen to be $C$ with $\rho = 4/\sqrt n$.
Despite the weaker guarantee in Corollary \ref{cor:lattice} the SSS demonstrates the importance of exploiting the graph structure.

\subsection{Kronecker Graphs}

Much of the research in complex networks has focused on observing statistical phenomena that is common across many data sources.
The most notable of these are that the degree distribution obeys a power law (\cite{faloutsos1999power}) and networks are often found to have small diameter (\cite{milgram1967small}).
A class of graphs that satisfy these, while providing a simple modelling platform are the Kronecker graphs (see \cite{leskovec2007scalable,leskovec2010kronecker}).
Let $H_1$ and $H_2$ be graphs on $p$ vertices with Laplacians $\Lb_1, \Lb_2$ and edge sets $E_1, E_2$ respectively.
The Kronecker product, $H_1 \otimes H_2$, is the graph over vertices $[p] \times [p]$ such that there is an edge $((i_1,i_2),(j_1,j_2))$ if $i_1 = j_1$ and $(i_2,j_2) \in E_2$ or $i_2 = j_2$ and $(i_1,j_1) \in E_1$.
We will construct graphs that have a multi-scale topology using the Kronecker product.
Let the multiplication of a graph by a scalar indicate that we multiply each edge weight by that scalar.
First let $H$ be a connected graph with $p$ vertices.
Then the graph $G$ for $\ell > 0$ levels is defined as  
\[
\frac{1}{p^{\ell-1}} H \otimes \frac{1}{p^{\ell-2}} H \otimes ... \otimes \frac{1}{p} H \otimes H
\]
The choice of multipliers ensures that it is easier to make cuts at the more coarse scale.
Notice that all of the previous results have held for weighted graphs.

\begin{corollary}
\label{cor:kron}
  For $G$ be the Kronecker product graph described above with $n = p^\ell$ vertices, the spectral scan statistic can asymptotically distinguish $H_0$ from signals with cuts within the $k$ coarsest scale ($\rho \propto p^{2k - \ell - 1}$), if the SNR is stronger than,
\[
\frac{\eta}{\sigma} = \omega ( p^2(\ell + 2) n^{(2k + 1)/\ell}  )
\]
\end{corollary}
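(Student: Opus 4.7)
The plan is to apply the sufficient condition for asymptotic distinguishability from Theorem~\ref{thm:hypo_bd}, in the spectral form \eqref{eqn:dist_bound1} (or its weaker but often handier consequence \eqref{eqn:dist_bound2}), so the real work is to pin down the Laplacian spectrum of the weighted product graph $G$. The product $\otimes$ introduced in this subsection is the graph \emph{Cartesian} product, and for this product the Laplacian satisfies $\Lb_{H_1 \otimes H_2} = \Lb_{H_1} \otimes I + I \otimes \Lb_{H_2}$; iterating across the $\ell$ factors and incorporating the scalars $p^{-(\ell-j)}$ attached to the $j$-th factor, one finds
\[
\lambda_{(i_1,\ldots,i_\ell)} \;=\; \sum_{j=1}^\ell p^{-(\ell-j)}\,\mu_{i_j},
\qquad (i_1,\ldots,i_\ell)\in [p]^\ell,
\]
where $0 = \mu_0 \le \mu_1 \leq \cdots \leq \mu_{p-1}$ are the Laplacian eigenvalues of the base graph $H$.

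With the full spectrum in hand, I would bin eigenvalues by their \emph{finest active scale} $m(i)=\max\{j:i_j\neq 0\}$. Since the sum defining $\lambda_i$ is a geometric progression in $p^j$, it is dominated by its largest term, so $\lambda_i \asymp p^{m(i)-\ell}\mu_{i_{m(i)}}$, and the number of indices with finest active scale equal to $m$ is $(p-1)p^{m-1}\asymp p^m$. Substituting these estimates into $\sum_{i>1}\min\{1,\rho/\lambda_i\}$ with $\rho\asymp p^{2k-\ell-1}$, the natural split is at $m\approx 2k-1$: bins with $m\leq 2k-2$ contribute $\asymp p^m$ each (the minimum equals $1$) and sum to $O(p^{2k-2})$; bins with $m\geq 2k-1$ contribute $\asymp p^m\cdot \rho\, p^{\ell-m}=p^{2k-1}$ each and sum to $O((\ell-2k+2)\,p^{2k-1})$. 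Once the multiplicative constants coming from $\mu_1(H)^{-1}$ and $\mu_{p-1}(H)$ are restored — these are what account for the $p^2$ prefactor in the stated rate — inserting the resulting bound into \eqref{eqn:dist_bound1} yields the required condition on $\eta/\sigma$. Equivalently, one can take the shorter route of \eqref{eqn:dist_bound2} with $k'\asymp p^{2k-1}$ and $\lambda_{k'+1}$ equal to the smallest eigenvalue having a non-trivial component at scale $m\ge 2k$, which is $\asymp p^{2k-\ell}$; this manifestly satisfies $\lambda_{k'+1}>\rho$ and recovers the same rate.

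The principal obstacle is the multi-scale bookkeeping: both the number of eigenvalues in a given scale band and their typical magnitude change by a factor of $p$ between consecutive scales, so any loose estimate within a single bin is amplified as one sums across the $\ell$ scales and can easily ruin the $\ell$-dependence of the final rate. A related but secondary subtlety is that the single-scale approximation $\lambda_i\asymp p^{m(i)-\ell}\mu_{i_{m(i)}}$ relies on the geometric series $\sum_{j\le m}p^{j-\ell}\mu_{i_j}$ being dominated by its last term, which holds only up to the factor $(1-1/p)^{-1}$; keeping this factor together with the spectral-gap and max-eigenvalue constants from $H$ is precisely what propagates to the $p^2(\ell+2)$ prefactor in the stated bound.
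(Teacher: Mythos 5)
Your argument is essentially the paper's own: identify the spectrum of the weighted Cartesian product as the sums $\sum_j p^{-(\ell-j)}\mu_{i_j}$, lower-bound each eigenvalue by its dominant-scale term, count $\asymp p^m$ eigenvalues per scale, split $\sum_{i>1}\min\{1,\rho\lambda_i^{-1}\}$ at the scale where $\lambda\approx\rho$ to get $O((\ell+2)p^{2k-1}/\nu_2)$, and invoke $\nu_2\ge 4p^{-2}$ for the $p^2$ prefactor — your binning by the finest active scale $m(i)$ is exactly the paper's $Z(i)=\ell-m(i)$ in different notation. The only piece you omit is the paper's short verification that scale-$k$ cuts indeed have sparsity $\rho\propto p^{2k-\ell-1}$ (you take $\rho$ as given), which is a minor gap in justifying the parenthetical claim but not in the SNR bound itself.
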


The proof and an explanation of $\rho$ is in the appendix.
Again, we demonstrate the improvement of the SSS over competing tests in Figure \ref{fig1}.
For these simulations the base graph $H$ was chosen to be two triangles ($K_3$) connected by a single edge (p = 6).
At the coarsest scale one of the $K_3$ subgraphs was chosen to be $C$ with $\rho = 4/ n$.


\section{Discussion}
We studied the heretofore unaddressed problem of how to tractably detect change-points in networks under Gaussian noise.
To this end we developed the spectral scan statistic, suggesting it as a computationally feasible alternative to the GLRT.
We completely characterized the performance of the SSS for any graph in terms of the spectrum of the combinatorial Laplacian.
For comparison purposes, we developed theoretical guarantees for two simple estimators.
We applied the main result to three graph models: binary balanced trees, the lattice and Kronecker graph.
We see that not only is it statistically inadmissible to ignore graph structure, but for the balanced tree the SSS gives near optimal performance.
This claim is backed by both simulation and theory.



\section*{Acknowledgements} This research is supported in part by AFOSR under grant FA9550-10-1-0382 and NSF under grant IIS-1116458.

\bibliographystyle{plainnat}
\bibliography{biblio}

\newpage
\section{Appendix}

\subsection{Proofs in Section 2}

\begin{proof}[Proof of Lemma \ref{lem:GLRTform}]
  To expedite the proof, we express the LR statistics in terms of the sufficient statistics $\yb_0 = \frac{1}{|C|} \sum_{i \in C} \yb_i \sim N(\beta_0,\sigma_0^2)$ and $\yb_1 = \frac{1}{|\bar C|} \sum_{i \in \bar C}\yb_i \sim N(\beta_1, \sigma_1^2)$ for $\sigma_0 = \sigma / \sqrt{|C|}$ and $\sigma_1 = \sigma / \sqrt{|\bar{C}|}$. Then, we obtain
\[
 2 \log \Lambda_C(\yb) = \frac{1}{\sigma_0^2} (\yb_0 - \hat{\beta})^2 + \frac{1}{\sigma_1^2} (\yb_1 - \hat{\beta})^2 
\]
where $\hat{\beta} = \frac{\sigma_1^2}{\sigma_0^2 + \sigma_1^2} \yb_0 + \frac{\sigma_0^2}{\sigma_0^2 + \sigma_1^2} \yb_1$ is the MLE under $H_0$. (The likelihood under the alternative balances with the normalizing constant of the null likelihood.) Thus,
\[
 2 \log \Lambda_C(\yb) = \frac{1}{\sigma_0^2} \left( \frac{\sigma_0^2}{\sigma_0^2 + \sigma_1^2} (\yb_0 - \yb_1) \right)^2 + \frac{1}{\sigma_1^2} \left( \frac{\sigma_1^2}{\sigma_0^2 + \sigma_1^2} (\yb_0 - \yb_1) \right)^2
\]
\[
= \frac{(\yb_0 - \yb_1)^2}{\sigma_0^2 + \sigma_1^2} = \frac{1}{\sigma^2} \frac{|C| |\bar{C}|}{|V|} (\yb_0 - \yb_1)^2
\]
\[
= \frac{1}{\sigma^2} \frac{|V|}{|C| |\bar{C}|} \left( \frac{|\bar{C}|}{|V|}\sum_{v \in C} \yb_v - \frac{|C|}{|V|} \sum_{v \in \bar{C}} \yb_v \right)^2
\]
\begin{equation}
\label{eqn:LRT_form}
= \frac{1}{\sigma^2} \frac{|V|}{|C| |\bar{C}|} \left( \sum_{v \in C} \yb_v - \frac{|C|}{|V|} \sum_{v \in V} \yb_v \right)^2 = \frac{1}{\sigma^2} \frac{|V|}{|C| |\bar{C}|} \left( \sum_{v \in C} \tilde{\yb}_v \right)^2.
\end{equation}
Now we let $\xb = {\bf 1}_C$, making the statistic above
\[
2 \sigma^2 \log \Lambda_C(\yb) = \frac{\xb^\top \tilde \yb \tilde \yb \xb}{\xb^\top \Kb \xb} \textrm{ and } \frac{|\partial C | |V| }{|C||\bar{C}|} = \frac{\xb^\top \Lb \xb}{\xb^\top \Kb \xb}.
\]
The result now follows by considering all the indicator functions corresponding to the sets in $\mathcal{C}$. 
\end{proof}

\subsection{Proofs in Section 3}

\begin{proof}[Proof of Theorem \ref{thm:lower_bd}]
Let the true $C \in \Ccal$ be known.
The performance of the optimal test with $C$ known, which by the Neyman-Pearson Lemma is based on $2 \log \Lambda_C(\yb)$, bounds the performance of that with $C$ unknown. To this end, note that, under $H_0$, the LR statistic \eqref{eq:LR} has a $\chi^2_1$, while under the alternative $H_1^C$ it has a $\chi^2_1(\lambda)$ distribution with non-centrality parameter
\[
\lambda = \frac{\delta^2}{\sigma^2}\frac{|C| |\bar C|}{|V|} = \frac{\eta^2}{\sigma^2},
\] 
which is the square of the SNR. For fixed $C$, asymptotically indistinguishable of $H_0$ versus $H_C^1$ follows by considering any threshold and noticing that the associated type 1 and type 2 errors are non-vanishing under the SNR scaling assumed in the statement. Since the risk of testing $H_0$ versus $H_1$ is no smaller than the risk of testing $H_0$ versus $H_C^1$, the result follows.
\end{proof}
We remark that the proof of the previous result shows that when distinguishing $H_0$ from $H_1^C$, the power of the test is maximal when $|C| = |\bar C|$ for a fixed value of the SNR.

\begin{proof}[Proof of Lemma \ref{lem:cheegerBdd}]
Without loss of generality, let $\yb \sim \Ncal(\zero, \Ib)$. We recall that, since $G$ is connected, the combinatorial Laplacian $\Lb$ is symmetric, its smallest eigenvalue is zero and the remaining eigenvalues are positive. By the spectral theorem, we can write $\Lb = \Ub \Lambda \Ub^\top$, where $\Lambda$ is a $(n-1) \times (n-1)$ diagonal matrix containing the positive eigenvalues of $\Lb$ in increasing order and the columns of the $n \times (n-1)$ matrix $\Ub$ are the associated eigenvectors.
Then, since each vector $\xb \in \mathbb{R}^n$ with ${\bf1}^\top \xb = 0$ can be written as $\Ub \zb$ for a unique vector $\zb \in \mathbb{R}^{n-1}$, we have
\[
\begin{array}{rcl}
\mathcal{X} & = & \{ \xb \in \mathbb{R}^n \colon \xb^\top \Lb \xb \leq \rho, \xb^\top \xb =1, {\bf1}^\top \xb \leq 0\}\\
& = & \{ \Ub \zb \in \mathbb{R}^n \colon \zb \in \mathbb{R}^{n-1}, \zb^\top \Ub^\top \Lb \Ub \zb \leq \rho, \zb^\top \Ub^\top \Ub\zb \leq1\} \\
& = & \{ \Ub \zb \in \mathbb{R}^n \colon \zb \in \mathbb{R}^{n-1}, \frac{1}{\rho} \zb^\top \Lambda \zb \leq 1, \zb^\top \zb \leq1\}, \\
\end{array}
\]
where in the third identity we have used the fact that $\Ub^\top \Ub = \Ib_{n-1}$. Letting $\mathcal{Z} = \{ \zb \in \mathbb{R}^{n-1} \colon \frac{1}{\rho} \zb^\top \Lambda \zb \leq 1, \zb^\top \zb \leq 1 \}$, we see that
\[
\sup_{\xb \in \mathcal{X}} \xb^\top \yb = \sup_{\zb \in \mathcal{Z}} \zb^\top \Ub^\top \yb  \stackrel{d}{=} \sup_{\zb \in \mathcal{Z}} \zb^\top \xib,
\]
where $ \xib \sim N(0,\Ib_{n-1})$ and $\stackrel{d}{=}$ denotes equality in distribution.

Next, we show that the set $\mathcal{Z}$, which is the intersection of an ellipsoid with the unit ball in $\mathbb{R}^{n-1}$, is contained in an enlarged ellipsoid. The supremum of the Gaussian process $\zb^\top \xib$ over $\mathcal{Z}$ will then  be bounded by the supremum of the same process over this larger but simpler set, which we will be able to bound using directly  a result from \cite{talagrand2005generic} based on chaining.
To this end, let  $\Ab = \frac{1}{\rho} \Lambda = \textrm{diag}\{ a_i\}_{i=1}^{n-1}$ and  $d = \max \{ j: a_j < 1 \}$. For for a vector $\zb \in \mathbb{R}^{n-1}$ set $\zb_1 = \zb_{[d]}$, $\zb_2 = \zb_{[n-1] \backslash [d]}$, and $\Ab_2 = \textrm{diag} \{ a_i \}_{i > d}$. Then, we observe the following chain of implications, holding for vectors $\zb \in \mathbb{R}^{n-1}$:
\[
\begin{aligned}
\| \zb \| \le 1, \zb^\top \Ab \zb \le 1 \Rightarrow \| \zb_1 \| \le 1, \sum_{i > d} a_i \zb_i^2 \le 1 \\
\Rightarrow \zb_1^\top \zb_1 + \zb_2^\top \Ab_2 \zb_2 \le 2 \Rightarrow \sum_{i} \frac{\max \{1 , a_i\}}{2} \zb_i^2 \le 1.
\end{aligned}
\]

Hence, we have the bound
\[
\EE \sqrt{\hat s} \le \EE \sup_{\zb \in \RR^{n-1}} \zb^\top \xib \textrm{ s.t. } \sum_{i} 2 \max \left\{1,a_i\right\} \xb_i^2 \le 1.
\]
Recalling that $a_i = \frac{\lambda_{i+1}}{\rho}$, for $i=1,\ldots,n-1$, where $\lambda_{i+1}$ is the $(i+1)$th eigenvalue of $\Lb$, by Proposition 2.2.1 in \cite{talagrand2005generic} the right hand side of the previous expression is bounded by $\sqrt{2 \sum_{i > 1} \min\{ 1, \rho \lambda_i^{-1}\} }$.
\end{proof}

\begin{proof}[Supplement to the proof of Theorem \ref{thm:hypo_bd}]
The following property of Gaussian processes effectively reduces the study of their supremum to the study of its expectation.
It was established by \cite{borell1975brunn} and \cite{cirel1976norms} and can be found in \cite{ledoux2001concentration}.
\begin{lemma}
  \label{lem:gauss_sup_conc}
  Consider a Gaussian process $\{Z_t \}_{t \in \Ucal}$ where $\Ucal$ is compact with respect to metric 
  \[
   d(s,t) = (\EE (Z_s - Z_t)^2)^{1/2}, \quad s,t, \in \Ucal,
  \]
 and let  $\sigma^2 \ge \sup_{t \in \Ucal} \EE Z_t^2$.
  We have that with probability at least $1 - \delta$
  \[
  \left| \sup_{t \in \Ucal} Z_t - \EE \sup_{t \in \Ucal} Z_t \right| < \sqrt{ 2 \sigma^2 \log\frac{2}{\delta}}.
  \]
\end{lemma}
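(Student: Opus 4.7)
The plan is to prove Lemma \ref{lem:gauss_sup_conc} by representing $F(\cdot) := \sup_{t \in \Ucal} Z_t$ as a Lipschitz function of an underlying standard Gaussian vector and invoking Gaussian concentration of measure. This is the classical Borell--TIS argument, so I would organize the proof to foreground the Lipschitz constant estimate, which is the only place where the bound $\sigma^2 \ge \sup_t \EE Z_t^2$ enters.

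First I would reduce to a finite index set. Since $\Ucal$ is compact under $d$, the process $\{Z_t\}$ has a countable dense subset $\Ucal_0 = \{t_1, t_2, \dots\}$ on which the supremum is attained almost surely (by continuity in probability together with separability). Writing $F_N = \max_{k \le N} Z_{t_k}$, both $\EE F_N \uparrow \EE F$ and $F_N \uparrow F$ almost surely by monotone convergence, so it suffices to prove the tail bound uniformly in $N$ and pass to the limit. For fixed $N$ the random vector $(Z_{t_1}, \dots, Z_{t_N})$ is centered Gaussian in $\RR^N$, hence can be written as $A \xi$ for some matrix $A$ and $\xi \sim N(0, \Ib_m)$ on some sufficiently large $\RR^m$, and then $F_N(\xi) = \max_{k \le N} \langle a_k, \xi \rangle$ where $a_k$ is the $k$th row of $A$ and $\|a_k\|^2 = \EE Z_{t_k}^2 \le \sigma^2$.

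Next I would compute the Lipschitz constant of $F_N \colon \RR^m \to \RR$. For any $\xi, \xi' \in \RR^m$, pick $k^\star$ achieving the max at $\xi$; then
\[
F_N(\xi) - F_N(\xi') \le \langle a_{k^\star}, \xi - \xi' \rangle \le \|a_{k^\star}\| \cdot \|\xi - \xi'\| \le \sigma \|\xi - \xi'\|,
\]
and by symmetry $|F_N(\xi) - F_N(\xi')| \le \sigma \|\xi - \xi'\|$. Thus $F_N$ is $\sigma$-Lipschitz in the Euclidean norm. Now I would invoke the standard Gaussian concentration inequality (Borell, Cirel'son--Ibragimov--Sudakov): if $\Phi \colon \RR^m \to \RR$ is $L$-Lipschitz and $\xi \sim N(0, \Ib_m)$, then for all $u > 0$
\[
\PP\{|\Phi(\xi) - \EE \Phi(\xi)| > u\} \le 2 \exp\!\left(-\frac{u^2}{2 L^2}\right).
\]
Applying this to $\Phi = F_N$ with $L = \sigma$ and setting the right-hand side equal to $\delta$ yields $u = \sqrt{2 \sigma^2 \log (2/\delta)}$, so
\[
\PP\bigl\{|F_N - \EE F_N| > \sqrt{2 \sigma^2 \log(2/\delta)}\bigr\} \le \delta.
\]
Finally, I would let $N \to \infty$: $F_N \to F$ a.s. and $\EE F_N \to \EE F$, so the same deviation bound holds for $F = \sup_{t \in \Ucal} Z_t$, which is exactly the claim.

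The only genuinely delicate step is the separability reduction in the first paragraph, since $F$ must be measurable and the pointwise supremum over an uncountable index set is not automatic; however, compactness of $\Ucal$ in the canonical metric $d$ plus the standard sample-path continuity argument for Gaussian processes (Dudley, or equivalently Talagrand's majorizing measure framework already used elsewhere in the paper) resolves this routinely. The Lipschitz bound and the appeal to Gaussian concentration are then clean and form the conceptual core of the result.
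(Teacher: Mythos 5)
Your proof is correct, but it is worth noting that the paper does not actually prove this lemma at all: it is presented as a known result (the Borell--Tsirelson--Ibragimov--Sudakov concentration inequality) with citations to Borell (1975), Cirel'son et al.\ (1976), and Ledoux (2001), and the only substantive content the paper adds is the observation that the canonical metric for the process $\{\xb^\top \yb\}_{\xb \in \Xcal}$ is $\sigma\|\xb_0 - \xb_1\|$. What you supply is the standard self-contained derivation: reduce to a finite index set by separability, realize the finite-dimensional marginal as $A\xi$ with $\xi$ standard Gaussian, observe that $\xi \mapsto \max_k \langle a_k, \xi\rangle$ is Lipschitz with constant $\sup_k \|a_k\| \le \sigma$, and invoke Gaussian concentration for Lipschitz functions before passing to the limit. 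This is exactly where the hypothesis $\sigma^2 \ge \sup_t \EE Z_t^2$ enters, and your Lipschitz estimate and the algebra $u = \sqrt{2\sigma^2\log(2/\delta)}$ are both right. Two small caveats: the statement is only meaningful when $\EE \sup_t Z_t < \infty$, which does not follow from compactness of $\Ucal$ alone and is implicitly assumed (in the paper's application it is guaranteed by the chaining bound of Lemma 2); and your final limiting step should be phrased with an $\epsilon$-margin (bound $\PP\{|F - \EE F| > u\} \le \liminf_N \PP\{|F_N - \EE F_N| > u - \epsilon\}$ and let $\epsilon \to 0$) rather than asserting the tail bound transfers verbatim, though this is routine. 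Your approach buys a self-contained argument at the cost of taking the Gaussian Lipschitz concentration inequality as the black box instead of the lemma itself; the paper's choice to cite is defensible since the two are essentially equivalent in depth.
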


Notice that the natural distance is given by $d(\xb_0,\xb_1) = (\EE ((\xb_0 - \xb_1)^\top \yb)^2)^{1/2} = \sigma \| \xb_0 - \xb_1 \|$ for $\xb_0, \xb_1 \in \Xcal$.
\end{proof}

\begin{proof}[Proof of Theorem \ref{thm:energy}]
Recall that $\tilde \yb = \Kb \yb$, where $\Kb = \Ib_n - \frac{1}{n}{\bf 1} {\bf 1}^\top$ is the orthogonal projection matrix into the $(n-1)$-dimensional linear subspace of vectors orthogonal to ${\bf 1}$. 
Under $H_0$, $\tilde \yb \sim N(0,\sigma^2 \Kb)$, and, therefore, $\| \tilde \yb \|^2 \sim \chi^2_{n-1}$, since $\textrm{tr}(\Kb) = n-1$. On the other hand,  under $H_1^C$ for  a fixed $C$, $\tilde \yb \sim N(\Kb \betab,\sigma^2 \Kb)$, where $\betab$ is given in as in \eqref{eq:beta.signal}. Thus, under $H_1^C$, $\| \tilde \yb \|^2 \sim \chi^2_{n-1}(\lambda)$, where the non-centrality parameter is given by
\begin{equation}\label{eq:chi}
\lambda =  \betab^\top \Kb \betab = \frac{1}{2} \betab^\top \Kb^\top \Kb \betab =  \| \betab - \bar \betab\|^2 \geq \frac{\eta^2}{2},
\end{equation}
where the second identity is due to the fact that $\Kb$ is symmetric and idempotent and the last inequality to our assumption on the minimal separation $\eta$ between $H_0$ and any of the alternatives. Thus,  if $\eta/\sigma = \omega(\sqrt{n-1})$, 
then $\lambda = \omega(n-1)$. Hence, 
using standard chi-square tail bounds (see for example proposition 2 of \cite{MartinSSP12}) and since the bound \eqref{eq:chi} holds uniformly over all $C \in\mathcal{C}$, it follows that the null and alternate are
asymptotically distinguishable using the test statistic $\|\tilde \yb\|$ if and only if $\frac{\eta}{\sigma} = \omega(\sqrt{n-1})$.
\end{proof}

\subsection{Proof in Section 4}

\begin{proof}[Proof of Corollary \ref{cor:BBT}]

The study of the spectra of trees really began in earnest with the work of \cite{fiedler1975eigenvectors}.
Notably, it became apparent that tree have eigenvalues with high multiplicities, particularly the eigenvalue $1$.
\cite{molitierno2000tight} gave a tight bound on the algebraic connectivity of balanced binary trees (BBT).
They found that for a BBT of depth $\ell$, the reciprocal of the smallest eigenvalue ($\lambda_2^{(\ell)}$) is 
\begin{equation}
\label{eqn:tree_eig_bound}
\begin{aligned}
\frac{1}{\lambda_2^{(\ell)}} \le 2^\ell - 2\ell + 2 - \frac{2^\ell - \sqrt{2} (2\ell -1 - 2^{\ell-1})}{2^\ell - 1 - \sqrt 2 (2^{\ell - 1} - 1)} + (3 - 2 \sqrt 2 \cos (\frac{\pi}{2\ell - 1}))^{-1} \\
\le 2^\ell + 105 I\{ \ell < 4 \}
\end{aligned}
\end{equation}
\cite{rojo2002spectrum} gave a more exact characterization of the spectrum of a balanced binary tree, providing a decomposition of the Laplacian's characteristic polynomial.
Specifically, the characteristic polynomial of $\Lb$ is given by
\begin{equation}
\label{eqn:tree_char_poly}
\det (\lambda \Ib - \Lb) = p_1^{2^{\ell - 2}}(\lambda) p_2^{2^{\ell - 3}}(\lambda) ... p_{\ell - 3}^{2^2}(\lambda) p_{\ell - 2}^2(\lambda) p_{\ell - 1}(\lambda) s_\ell(\lambda)
\end{equation}
where $s_\ell(\lambda)$ is a polynomial of degree $\ell$ and $p_i(\lambda)$ are polynomials of degree $i$ with the smallest root satisfying the bound in \eqref{eqn:tree_eig_bound} with $\ell$ replaced with $i$.
In \cite{rojo2005spectra}, they extended this work to more general balanced trees.

By \eqref{eqn:tree_char_poly} we know that at most $\ell + (\ell - 1) + (\ell - 2)2 + ... + (\ell - j)2^{j - 1} \le \ell 2^j$ eigenvalues have reciprocals larger than $2^{\ell - j} + 105 I\{ j < 4 \}$.
Let $k = \max \{ \lceil \frac{\ell}{c} 2^{\ell (1 - \alpha)} \rceil, 2^3\}$, then we have ensured that at most $k$ eigenvalues are smaller than $\rho$.
For $n$ large enough
\[
\sum_{i > 1} \min\{1, \rho \lambda_i^{-1}\} \le k + \rho \sum_{j > \log k}^{\ell} \ell 2^j 2^{\ell - j}  = k + \ell (\ell - \log k) n \rho = O(n^{1-\alpha}(\log n)^2) 
\]

\end{proof}

\begin{proof}[Proof of Corollary \ref{cor:kron}]

The Kronecker product of two matrices $\Ab, \Bb \in \RR^{n \times n}$ is defined as $\Ab \otimes \Bb \in \RR^{(n \times n) \times (n \times n)}$ such that $(\Ab \otimes \Bb)_{(i_1,i_2),(j_1,j_2)} = A_{i_1,j_1} B_{i_2,j_2}$.
Some matrix algebra shows that if $H_1$ and $H_2$ are graphs on $p$ vertices with Laplacians $\Lb_1, \Lb_2$ then the Laplacian of their Kronecker product, $H_1 \otimes H_2$, is given by $\Lb = \Lb_1 \otimes \Ib_p + \Ib_p \otimes \Lb_2$ (\cite{merris1998laplacian}).
Hence, if $\vb_1, \vb_2 \in \RR^p$ are eigenvectors, viz.~$\Lb_1 \vb_1 = \lambda_1 \vb_1$ and $\Lb_2 \vb_2 = \lambda_2 \vb_2$, then $\Lb (\vb_1 \otimes \vb_2) = (\lambda_1 + \lambda_2) \vb_1 \otimes \vb_2$, where $\vb_1 \otimes \vb_2$ is the usual tensor product.
This completely characterizes the spectrum of Kronecker products of graphs.

We should argue the choice of $\rho \propto p^{2k - \ell - 1}$, by showing that it is the results of cuts at level $k$.
We say that an edge $e = ((i_1,...,i_\ell),(j_1,...,j_\ell))$ has scale $k$ if $i_k \ne j_k$.
Furthermore, a cut has scale $k$ if each of its constituent edges has scale at least $k$.
Each edge at scale $k$ has weight $p^{k - \ell}$ and there are $p^{\ell-1}$ such edges, so cuts at scale $k$ have total edge weight bounded by 
\[
p^{\ell - 1} \sum_{i = 1}^k p^{i - \ell} = p^{k - 1} \frac{p - \frac{1}{p^{k-1}}}{p - 1} \le \frac{p^k}{p - 1}
\]
Cuts at scale $k$ leave components of size $p^{\ell - k}$ intact, meaning that $\rho \propto p^{2k - \ell - 1}$ for large enough $p$. 

We now control the spectrum of the Kronecker graph.
Let the eigenvalues of the base graph $H$ be $\{\nu_j \}_{j=1}^p$ in increasing order.
The eigenvalues of $G$ are precisely the sums
\[
\lambda_i = \frac{1}{p^{\ell-1}} \nu_{i_1} + \frac{1}{p^{\ell-2}} \nu_{i_2} + ... + \frac{1}{p} \nu_{i_{\ell-1}} + \nu_{i_\ell}
\]
for $i = (i_j)_{j = 1}^\ell \subseteq [p]$.
The eigenvalue distribution $\{ \lambda_i \}$ stochastically bounds 
\[
\lambda_i \ge \sum_{j = 1}^\ell \frac{1}{p^{\ell-j}} \nu_2 I\{\nu_{i_j} \ne 0\} \ge \frac{\nu_2}{p^{Z(i)}}
\]
where $Z(i) = \min \{j : \nu_{i_{\ell - j}} \ne 0\}$.
Notice that if $i$ is chosen uniformly at random then $Z(i)$ has a geometric distribution with probability of success $(p - 1)/p$.
Also $\rho / (\frac{\nu_2}{p^{Z(i)}}) = p^{Z(i) + 2k - \ell - 1}/\nu_2 \ge 1$ if $Z(i) \ge \ell + 1 - 2k + \log_p \nu_2$, so 
\[
\frac{1}{p^\ell}\sum_{i \in [p]^\ell} \min\{1 , \frac{\rho}{\lambda_i}\} \le \frac{p^{2k-\ell-1}}{\nu_2} + \sum_{Z = 1}^{\lfloor \ell + 1 - 2k + \log_p \nu_2 \rfloor} \frac{p^{Z + 2k - \ell - 1}}{\nu_2} \frac 1{p^Z} \frac{p - 1}{p} \le \frac{(\ell + 2) p^{2k - \ell - 1}}{\nu_2}
\] 
This followed from the geometric probability mass function.
We also know that the algebraic connectivity, $\nu_2$, is bounded from below by $4 p^{-2}$, so the following result holds.

\end{proof}

\end{document}